\definecolor{lightred}{RGB}{255,152,152}
\newcommand{\NN}{\mathbb{N}}
\newcommand{\PP}{\mathbb{P}}
\newcommand{\RR}{\mathbb{R}}
\newcommand{\ZZ}{\mathbb{Z}}
\def\a{\alpha}
\def\t{\theta}
\def\T{\Theta}
\newcommand{\Da}{ {\cal D }}
\newcommand{\Ffr}{\mathfrak{F}}
\newcommandx{\asvar}[1][1=]{
\ifthenelse{\equal{#1}{}}
{\sigma^2}
{\sigma_{#1}^2}
}
\newcommandx{\asvarti}[1][1=]{
\ifthenelse{\equal{#1}{}}
{\tilde{\sigma}^2}
{\tilde{\sigma}_{#1}^2}
}
\newcommandx{\asquad}[1][1=]{
\ifthenelse{\equal{#1}{}}
{\gamma}
{\gamma_{#1}}
}
\newcommandx{\asquadti}[1][1=]{
\ifthenelse{\equal{#1}{}}
{\tilde{\gamma}}
{\tilde{\gamma}_{#1}}
}
\newcommand{\bmf}{\mathcal{B}_b} % bounded measurable funtion
\newcommand{\bmfone}{\mathcal{B}_1} %bounded measurable function by 1
\newcommand{\bg}[2][]{
\ifthenelse{\equal{#1}{}}
{\Psi_{#2}}
{\Psi_{#1,#2}} %boltzman-gibbs tranfo 
}
\newcommand{\bgx}[2][]{
\ifthenelse{\equal{#1}{}}
{\Psi^X_{\t_{#2},#2}}
{\Psi^X_{\t_{#2}^{#1},#2}} %boltzman-gibbs tranfo 
}
\newcommand{\bgb}[1]{\overline{\Psi}_{#1}} %boltzman-gibbs tranfo pour le couple theta mesure
\newcommandx{\cexp}[3][1=]{
\ifthenelse{\equal{#1}{}}
{\mathbb{E}\left[ #2 \mid #3 \right]} % conditional expectation
{\mathbb{E}_{#1}[ #2 \mid #3 ]}
}
\newcommandx{\der}[1][1=]{
\ifthenelse{\equal{#1}{}}
{w}
{w_{#1}}
}
\newcommandx\epart[3][1=]{
\ifthenelse{\equal{#1}{}}
{\xi^\N(#2,#3)}
{\xi_{#1}^\N(#2,#3)}
}
\newcommandx\eparttd[3][1=]{
\ifthenelse{\equal{#1}{}}
{\tilde{\xi}^\N(#2,#3)}
{\tilde{\xi}_{#1}^\N(#2,#3)}
}
\newcommand{\esp}[2][]{
\ifthenelse{\equal{#1}{}}
{\mathbb{E}\left[#2 \right]}
{\mathbb{E}_{#1}\left[#2 \right]}  
} %expectation
\newcommand{\eqsp}{} 
\newcommand{\eqdef}{\triangleq} % definitions
\newcommand{\fb}{\overline{f}} % f bar
\newcommandx\filt[1]{\sigma(\Xb[0],\ldots,\Xb[#1])} % filtration
\newcommandx\filttilde[1]{\sigma(\Xti[0],\ldots,\Xti[#1])}
\newcommandx\filttd[1][1=]{
\ifthenelse{\equal{#1}{}}
{\tilde{\field{F}}^\N}
{\tilde{\field{F}}_{#1}^\N}
}
\newcommandx\ffilt[1][1=]{
\ifthenelse{\equal{#1}{}}
{\field{G}^\N}
{\field{G}_{#1}^\N}
}
\newcommand{\intvect}[2]{\llbracket #1, #2 \rrbracket} %integer number set
\newcommand{\kacb}[1]{\overline{\eta}_{#1}} % Feynman-Kac measure for couple
\newcommand{\kacti}[1]{\tilde{\eta}_{#1}} % Feynman-Kac measure for couple approximated
\newcommand{\kact}[1]{\eta_{#1}^{\Theta}} % Feynman-Kac measure for theta
\newcommand{\kacxe}[1]{
\ifthenelse{\equal{#1}{0}}
{\eta^X_{\t_0^{[\Nis]},0}}
{\eta^X_{\t^{[\Nis]}_{0:#1},#1}} 
}
\newcommand{\kacx}[2][]{
\ifthenelse{\equal{#1}{}}
{
\ifthenelse{\equal{#2}{0}}
{\eta^{X}_{\t_0}}
{\eta^{X}_{\t_{0:#2},#2}}
}
{
\ifthenelse{\equal{#2}{0}}
{\eta^{X}_{\t_0^{#1},0}}
{\eta^{X}_{\t^{#1}_{0:#2},#2}}
}
}
\newcommand{\kacX}[1]{
\ifthenelse{\equal{#1}{0}}
{\eta^X_{\T_0,0}}
{\eta^X_{\T_{0:#1},#1}}
}
\newcommand{\kacXN}[2][]{
\ifthenelse{\equal{#1}{}}
{
\ifthenelse{\equal{#2}{0}}
{\eta^{X,\Nin}_{\t_0}}
{\eta^{X,\Nin}_{\t_{0:#2},#2}}
}
{
\ifthenelse{\equal{#2}{0}}
{\eta^{X,\Nin}_{\t_0^{#1},0}}
{\eta^{X,\Nin}_{\t^{#1}_{0:#2},#2}}
}
}
\newcommand{\kacxN}[2][]{
\ifthenelse{\equal{#1}{}}
{
\ifthenelse{\equal{#2}{0}}
{\eta^{X,\Nin}_{\t_0}}
{\eta^{X,\Nin}_{\t_{0:#2},#2}}
}
{
\ifthenelse{\equal{#2}{0}}
{\eta^{X,\Nin}_{\t_0^{#1},0}}
{\eta^{X,\Nin}_{\t^{#1}_{0:#2},#2}}
}
}
\newcommand{\kacXTN}[2][]{
\ifthenelse{\equal{#1}{}}
{
\ifthenelse{\equal{#2}{0}}
{\eta^{X,\Nin}_{\T_0}}
{\eta^{X,\Nin}_{\T_{0:#2},#2}}
}
{
\ifthenelse{\equal{#2}{0}}
{\eta^{X,\Nin}_{\T_0^{#1},0}}
{\eta^{X,\Nin}_{\T^{#1}_{0:#2},#2}}
}
}
\newcommand{\hatkacxN}[2][]{
\ifthenelse{\equal{#1}{}}
{
\ifthenelse{\equal{#2}{0}}
{\eta^{X,\Nin}_{\hat{\t}_0}}
{\eta^{X,\Nin}_{\hat{\t}_{0:#2},#2}}
}
{
\ifthenelse{\equal{#2}{0}}
{\eta^{X,\Nin}_{\hat{\t}_0^{#1},0}}
{\eta^{X,\Nin}_{\hat{\t}^{#1}_{0:#2},#2}}
}
}
\newcommand{\unkacb}[1]{\overline{\gamma}_{#1}}
\newcommand{\unkacti}[1]{\tilde{\gamma}_{#1}}
\newcommand{\unkacx}[1]{
\ifthenelse{\equal{#1}{0}}
{\gamma^X_{\t_0,0}}
{\gamma^X_{\t_{0:#1},#1}}
}
\newcommand{\Lp}{\mathbb{L}^p}
\newcommand{\lip}{labeled island particle}
\newcommandx{\ulim}[1][1=]{
\ifthenelse{\equal{#1}{}}
{\stackrel{}{\longrightarrow}}
{\xrightarrow[{#1} \rightarrow +\infty]{}}
}
\newcommand{\meas}[1]{\set{M}(#1)}
\newcommand{\mf}[1]{\set{F}(#1)}
\newcommand{\N}{N}
\newcommand{\Nis}{N_1}
\newcommand{\Nin}{N_2}
\newcommand{\normdist}{{\sf N}}
\newcommand{\nsetpos}{\mathbb{N}^\ast}  % reels positifs
\newcommand{\1}[1][]{
\ifthenelse{\equal{#1}{}}
{\mathds{1}}
{\mathds{1}_{#1}}
}  % indicatrice
\newcommandx{\osc}[2][1=]{
\ifthenelse{\equal{#1}{}}
{\operatorname{osc}(#2)}
{\operatorname{osc}^2(#2)}
}
\newcommandx{\OscOne}{\operatorname{Osc}_1}
\newcommandx{\op}[1][1=]{
\ifthenelse{\equal{#1}{}}
{Q}
{Q_{#1}}
}
\newcommandx{\plim}[1][1=]{
\ifthenelse{\equal{#1}{}}
{\stackrel{\prob}{\longrightarrow}}
{\xrightarrow[{#1} \rightarrow +\infty]{\prob}}
}
\newcommandx{\dlim}[1][1=]{
\ifthenelse{\equal{#1}{}}
{\stackrel{\Da}{\longrightarrow}}
{\xrightarrow[{#1} \rightarrow +\infty]{\Da}}
}
\newcommand{\psp}{\mathcal{P}} % probability space
\newcommand{\ppart}[2][]{X_{#1}\ifthenelse{\equal{#1}{}}{}{^{#2}}}
\newcommand{\prob}{\mathbb{P}} %proba
\newcommand{\probmeas}[1]{\mathcal{P}(#1)}  
\newcommand{\pathprobt}[1]{\mathbb{P}^{\Theta}_{\eta^\Theta_0,#1}}
\newcommand{\pathprobx}[1]{\mathbb{P}^{X}_{\theta_{0:#1},#1}}
\newcommand{\rmd}{\mathrm{d}}  % element infinitesimal
\newcommand{\rset}{\mathbb{R}} % reals
\newcommand{\rsetpos}{\mathbb{R}^\ast_+}  % positive reals
 \newcommand{\rsetnonneg}{\mathbb{R}_+} % non-negative reals
\newcommand{\set}[1]{\mathsf{#1}} %set
\newcommandx{\supn}[2][1=]{
\ifthenelse{\equal{#1}{}}
{\left \| #2 \right \|_\infty} %sup norm
{ \| #2 \|_\infty}
}
\newcommand{\separt}[2][]{
\ifthenelse{\equal{#1}{}}
{\xi^{\N}(#2)}
{\xi_{#1}^{\N}(#2)}
}
\newcommand{\separttd}[2][]{
\ifthenelse{\equal{#1}{}}
{\tilde{\xi}^{\N}(#2)}
{\tilde{\xi}_{#1}^{\N}(#2)}
}
\newcommand{\spc}[1]{\set{E}_{#1}} %  space pour le couple
\newcommand{\spb}[1]{\overline{\set{E}}_{#1}} %space bar
\newcommand{\spt}[1]{\set{E}^{\T}_{#1}} % space for theta
\newcommand{\spx}[2][]{
\ifthenelse{\equal{#1}{}}
{\set{E}^{X}_{#2}}
{\set{E}^{X, #1}_{#2}}
}       % space for X
\newcommand{\stsp}{\set{E}}
\newcommand{\stfd}{\mathcal{E}}
\newcommand{\transt}[1]{M^{\Theta}_{#1}}    % mesure de transport pour theta
\newcommand{\transx}[2][]{
\ifthenelse{\equal{#1}{}}
{M^{X}_{\theta_{#2},#2}}  % mesure de transport pour X
{M^{X}_{\theta_{#2}^{#1},#2}} 
}
\newcommand{\transb}[1]{\overline{M}_{#1}}  % transport bar
\newcommand{\transti}[1]{ \tilde{M}_{#1}}    % transport tilde
\newcommandx{\wgt}[2][1=]{
\ifthenelse{\equal{#1}{}}
{\omega^{#2}}
{\omega^{#2}_{#1}}
}
\newcommandx{\samp}[2][1=]{
\ifthenelse{\equal{#1}{}}
{\xi^{#2}}
{\xi^{#2}_{#1}}
}
\newcommandx{\xb}[1][1=]{
\ifthenelse{\equal{#1}{}}
{\overline{x}}
{\overline{x}_{#1}}
}
\newcommandx{\Xb}[1][1=]{
\ifthenelse{\equal{#1}{}}
{\overline{X}}
{\overline{X}_{#1}}
}
\newcommandx{\Xti}[1][1=]{
\ifthenelse{\equal{#1}{}}
{\tilde{X}}
{\tilde{X}_{#1}}
}
\newcommand{\Yset}{\set{Y}}
\newcommand{\Yfd}{\mathcal{Y}}
\newcommand{\Zset}{\set{Z}}
\newcommand{\Zfd}{\mathcal{Z}}
\newtheorem{thrm}{Theorem}[section]
\newtheorem{prpstn}{Proposition}[section]
\newtheorem{dfntn}{Definition}[section]
\begin{document}
%%-----------------------------
%%      the top matter
%%-----------------------------
\title{Introduction to labeled island particle models and their asymptotic properties}
\author[1]{C\'ecile Ichard} 
\affil[1]{M\'et\'eo-France-CNRS, CNRM-GAME UMR 3589, 42 Avenue Coriolis, 31057 Toulouse Cedex 1, France; email : cecile.ichard@meteo.fr}

\author[2,3,4]{Christelle Verg\'e}
\affil[2]{Onera - The French Aerospace Lab, Chemin de la Huni\`ere et des Joncherettes, BP 80100, 91123 Palaiseau Cedex, France; email : christelle.verge@onera.fr}
\affil[3]{CNES - Centre National d'Etudes Spatiales, 18 avenue Edouard Belin, 31 401 Toulouse Cedex 9, France}
\affil[4]{CMAP UMR 7641 \'Ecole Polytechnique CNRS, Route de Saclay, 91128 Palaiseau Cedex France}

%
%

% \begin{resume}
%  L'estimation de processus stochastiques \'evoluant dans un environement al\'eatoire est d'une importance primordiale, par exemple pour pr\'edire les futures positions d'avions \'evoluant dans une atmosph\`ere mal connue. 
% Pour des param\`etres fixes, les syst\`emes de particules en int\'eraction constitue une technique possible pour approcher de tels processus.  
% Cependant le deuxi\`eme niveau d'incertitude fourni par les param\`etres environementaux nous am\`ene \`a consid\'erer les m\^emes m\'ethodes sur l'espace des param\`etres.  
% Ce nouvel algorithme est d\'ecrit dans ce papier. Il permet de faire une double approximation, de l'environement al\'eatoire et du processus \'evoluant dans ce dernier, en utilisant des observations du processus.
% Cet algorithme s\'equentiel g\'en\'eralise les mod\`eles d'il\^ots de particule incluant un param\`etre. Nous faisons r\'ef\'erence \`a lui par \textit{labeled island particle algorithm}. 
% Nous prouvons aussi la convergence du \textit{\lip algorithm} et \'etablissons les bornes $\Lp$ et les bornes $\Lp$ uniformes pour l'erreur asymptotique introduite par ce double niveau d'approximation.
% Finalement, nous illustrons ces r\'esultats sur un probl\`eme de filtrage o\`u il est question d'apprendre un param\`etre dynamique \`a travers les observations d'un processus al\'eatoire dont l'\'evolution est influenc\'ee par ce param\`etre. 
%  \end{resume}

%
%\subjclass{62M20, 60G35, 93E11}
%
%\keywords{Interacting particle systems, island particle models, filtering, random environment}
%
\maketitle
\begin{abstract}
Estimation of stochastic processes evolving in a random environment is of crucial importance for example to predict aircraft trajectories evolving in an unknown atmosphere. 
For fixed parameter, interacting particle systems are a convenient way to approximate such stochastic process. 
But the second level of uncertainty provided by the environment parameters  
leads us to also consider interacting particles on the parameter space. 
This novel algorithm is described in this paper. It allows to approximate both a random environment and a stochastic process evolving in this environment, given noisy observations of the process.
It is a sequential algorithm that generalizes island particle models including a parameter. It is referred by us as {\lip} algorithm. 
We prove the convergence of the {\lip} algorithm and we establish $\Lp$ bound as well as time uniform $\Lp$ bound for the asymptotic error introduced by this double level of approximation.
Finally, we illustrate these results on a filtering problem where one learns a dynamical parameter through noisy observations of a stochastic process influenced by the parameter. 

\end{abstract}

%%-----------------------------
%%      your text
%%-----------------------------
\section*{Introduction}
This paper deals with the estimation of stochastic processes whose evolution is influenced by a random environment.
This question is at stake in different areas. In economy, when one wants to estimate the option price with an unknown volatility \cite{cont:2006} using the Black-Scholes model, one can consider that the option price has its evolution influenced by an unknown environment, the market volatility. In biology, when one wants to estimate the number of bacteria whereas the environment factors are unknown \cite{Augustin:carlier:2000}, one can model the evolution of the bacteria number as a stochastic process whose evolution is influenced by unknown external factors. In air traffic management, this modelization can also be used when one wants to predict aircraft trajectories evolving in an unknown atmosphere.
Indeed if pilot intents and some aircraft parameters are not known, actual wind and temperature evolve locally and are not perfectly known neither. Those atmospheric parameters which appear in the dynamic equations of the aircraft have a great importance to predict the future position of the aircraft. They are thus both uncertain. Therefore, in order to improve the trajectory prediction, one has to learn aircraft parameters but also atmospheric ones. It has been shown in \cite{ichard:baehr:2013} that it can be done using mode-S radar observations and this specific model.

When the stochastic process evolving in the random media is Gaussian and its evolution is linear, the double estimation can be made using interacting Kalman filters (IKF) \cite{delmoral:2004, zghal:mevel:delmoral:2014}.
However when the dynamics are non-linear, as for aircraft dynamics, an analytic resolution is not possible. A method based on interacting particle systems, which takes into account the randomness due to the environment and also the randomness coming from the process itself, was proposed by Del Moral in \cite{delmoral:2004}. This idealized algorithm would be a sequential Monte Carlo (SMC) algorithm on the couple defined by the random environment and the conditional law of the process evolving in this random environment given the history of the environment. However, the calculation of the previous conditional law is not tractable in practice when the dynamics are non linear. Therefore another approximation level is necessary in order to estimate this conditional law. 
We propose in this paper to use interacting systems of interacting particles. These interacting systems can be viewed as a two-level interacting particle system. The top level particles are composed of an environment proposition and an empirical measure which gives an approximation of the process law  evolving in the proposed environment. The empirical measure is obtained by the second level of interacting particles. This nested structure was also presented in \cite{baehr:2008} for mean field processes.

This algorithm can be seen as a generalization of interacting island particle models where each island is associated with a random parameter. 
Those island particle models have been introduced in \cite{verge:dubarry:delmoral:moulines:2013} and their statistical properties studied in \cite{verge:delmoral:moulines:olsson:2014}, but without parameters. The first paper deals with the parallelization of interacting particle systems, the second one studies the asymptotic properties of the ensuing estimator.
Concerning filtering problems, Chopin et al. in \cite{chopin:jacob:papaspiliopoulos:2013} introduced a kind of island particle models where each island is identified by a parameter proposition.
They proposed an algorithm called $\text{SMC}^2$ which is a practical version of the idealized iterated batch importance sampling (IBIS) algorithm introduced by Chopin in \cite{chopin:2002} for exploring a sequence of parameter posterior distributions.
% $(p(\t|y_{1:n}))_{n \in \NN}$.
%in order to estimate the filtering distribution of parameter given observations. 
The considered parameter did not have any proper dynamic whereas in the present paper the stochastic process evolution scheme depends on a dynamic parameter.
Moreover, in the $\text{SMC}^2$ algorithm, islands of particles grow continuously with time as particles ancestral lines are required to estimate the likelihood increments, and by their product to estimate the total likelihood. 
%so to estimate their product to finally get the total likelihood. 
The algorithm introduced by Crisan et al. in \cite{crisan:miguez:2013} is a different version of the $\text{SMC}^2$ which allows also the estimation of fixed parameters of a state-space dynamic system using sequential Monte Carlo methods. However, unlike the $\text{SMC}^2$ method, the proposed algorithm by Crisan et al. operates in a purely sequential and recursive manner. In particular, the scheme for the rejuvenation of the particles in the parameter space is simpler, given that it does not need the simulation of the auxiliary particle filter from initial time to evaluate the likelihood.
Therefore the algorithm we propose in this paper is similar to the algorithm of \cite{crisan:miguez:2013} in the sense that it is sequential in time and structured as a nested interacting particle filters, but different as it deals with dynamic parameters.   

In this article, we present a novel algorithm for estimating both a random environment and a process whose evolution depends on this environment, and study the asymptotic properties of the ensuing estimators. This study is of great importance to justify the convergence of this algorithm and also a challenging issue as it deals with error in distribution space. 
Therefore as a first step we establish $\Lp$ bound for the asymptotic error introduced by this double level of approximation at every time step. As a matter of fact, the shape of the bound was predicted by Baehr in his thesis \cite{baehr:2008}. Then we obtain a time uniform $\Lp$ bound for the error. From there we deduce the almost sure convergence of the estimator towards the target measure. 
%The central limit theorem is also established as a direct consequence of the result obtained by Douc and Moulines in \cite{douc:moulines:2008}, replacing the exact kernel used to update the particles by the approximated kernel used to update the groups of particles. 
%To this end we will introduced weighted island particle systems.   
Afterwards, we compare the {\lip} algorithm and interacting Kalman filters (IKF) on a filtering example dealing with the evolution of a mobile on a random media. In particular, it appears that the {\lip} algorithm gives a better estimate of the position and the speed of the mobile than IKF. 
%given observations of the position and the speed.
Finally, the {\lip} algorithm is applied to another filtering problem where one learns a dynamical parameter through observations of a stochastic process influenced by the parameter. The theoretical results of this paper are illustrated on this example.

Formalization of the problem through Feynman-Kac measures is given in \autoref{sec:FK:random:media}, then the {\lip} algorithm is described in \autoref{sec:algo}. $\Lp$ bounds of this algorithm are established in \autoref{sec:lpbounds}. Finally, convergence of the {\lip} algorithm and some results proved in \autoref{sec:asymptotic} are illustrated in 
\autoref{sec:application} on two filtering examples.
\section{Feynman-Kac models in random media}
\label{sec:FK:random:media}

In this section, we first present an example which motivates our study, and then we introduce notations and models.

\subsection{Example of process evolution in random media}
\label{example:model}
In this article, one always consider stochastic processes whose evolution are influenced by their surrounding environment. When the environment is unknown, one can be interested in estimating both the environment and the law of the stochastic process itself using observations of the last one. Take a really simple example : a mobile evolving in $\RR^2$ whose dynamics is influenced by an unknown exterior force. 
This problem can be modeled by the following system of equations
\begin{align}
\label{mobile}
 \left\lbrace\begin{array}{lll}     
              X_{n+1} &=& X_{n} + V_n \left(\begin{array}{c}\cos \a \\
              \sin \a \end{array}\right)\Delta t + \Theta_{n+1} \Delta t + B^X_n\\
	      V_{n+1} & = & V_n + B^V_n \eqsp,\\
             \end{array}
\right.
\end{align}
where $X_{n+1}$ denotes the position of the mobile which depends on $\Theta_{n+1}$ and $B^X_n$ a Gaussian noise. The proper speed $V_n$ of the mobile, is known up to a Gaussian white noise $B^V_n$. $\a$ is the course track parameter of the mobile. The vector $\Theta_{n+1}$ is random and represents the unknown force acting on the position of the mobile.\\
We are interested in the estimation of the state of the mobile, which depends on the parameter $\Theta_{n+1}$. 
We thus need to learn both the force, the speed and the position of the mobile.
Consider now that noisy observations $Y_{n}$ of the mobile's state are available. 
One has to estimate the quantity $\cexp{(X_0,V_0,\Theta_{0}),\ldots,(X_n,V_n,\Theta_{n})}{ Y_0,\ldots,Y_n} \eqsp.$
Therefore, we need to use a model which can tackle this issue. To this end, the formalism of Feynman-Kac models in random media is well adapted. 
In \autoref{subsec:FKmodel}, we recall the definitions attached to this model and some important results. For a more detailed review see \cite{delmoral:2004}.

\subsection{Notations}
Let us define some notations used in this paper. 
For $(m, n) \in \ZZ^2$ such that $m \leq n$ we denote $\intvect{m}{n} \eqdef \{m, m + 1, \ldots, n \} \subset \ZZ$. 
We will use the vector notation $a_{m:n} \eqdef (a_m, \ldots, a_n)$.
Moreover, $\rsetnonneg$ and $\rsetpos$ denote the sets of nonnegative and positive real numbers respectively, and $\nsetpos$ the set of positive integers. 

$\normdist(\mu, \Sigma)$ denotes a multivariate Gaussian distribution with mean $\mu$ and covariance matrix $\Sigma$.  

In the sequel we assume that all random variables are defined on a common probability space $(\Omega,\mathcal{F},\PP)$. For some given measurable space $(\stsp, \stfd)$ we denote by $\meas{\stsp}$ and $\probmeas{\stsp} \subset \meas{\stsp}$ the set of measures and probability measures on $(\stsp, \stfd)$, respectively. In addition, we denote by $\mf{\stsp}$ the set of real-valued measurable functions on $(\stsp, \stfd)$ and by $\bmf(\stsp) \subset \mf{\stsp}$ the set of bounded such functions. For any $\nu \in \meas{\stsp}$ and $f \in \mf{\stsp}$ we denote by $\nu f \eqdef \int f(x) \, \nu(\rmd x)$  the Lebesgue integral of $f$ under $\nu$ whenever this is well-defined. Now, given also some other $(\Yset, \Yfd)$ measurable space, an \emph{unnormalized transition kernel} $K$ from $(\stsp, \stfd)$ to $(\Yset, \Yfd)$ is a mapping from $\stsp \times \Yfd$ to $\rset$ such that for all $\set{A} \in \Yfd$, $x \mapsto K(x, \set{A})$ is a nonnegative measurable function on $\stsp$ and for all $x \in \stsp$, $\set{A} \
mapsto K(x, \set{A})$ is a measure on $(\Yset, \Yfd)$. If $K(x, \Yset) = 1$ for all $x \in \stsp$, then $K$ is called a \emph{transition kernel} (or simply a \emph{kernel}). The kernel $K$ induces two integral operators, one acting on functions and the other on measures. More specifically, let $f \in \mf{\stsp}$ and $\nu \in \meas{\stsp}$ and define the measurable function 
$$
	K f : \stsp \ni x \mapsto \int f(y) \, K(x, \rmd y) \eqsp,
$$
and the measure 
$$
	\nu K : \Yfd \ni \set{A} \mapsto \int K(x, \set{A}) \, \nu(\rmd x) \eqsp,
$$
whenever these quantities are well-defined. Finally, let $K$ be as above and let $L$ be another unnormalized transition kernels from $(\Yset, \Yfd)$ to some third measurable space $(\Zset, \Zfd)$; then we define the \emph{product} of $K$ and $L$ as the unnormalized transition kernel
$$
K L : \stsp \times \Zfd \ni (x, \set{A}) \mapsto \int K(x, \rmd y) \, L(y, \set{A}) \eqsp,
$$
from $(\stsp, \stfd)$ to $(\Zset, \Zfd)$ whenever this is well-defined.
 
\subsection{Introduction of Feynman-Kac models}
\label{subsec:FKmodel}
Let $\Theta_n$ be a random process on $\spt{n}$ which influences the evolution of another random process $X_n$ on $\spx{n}$. 
In order to avoid any confusion, all the quantities which refer to the random process $\Theta_n$ (resp. $X_n$) may be identified by the exponent $\Theta$ (resp. $X$).
Let the couple $(\Theta_n,X_n)_{n \in \NN}$ be a $\spc{n} \eqdef (\spt{n},\spx{n})$- valued Markov chain of elementary transition matrix $T_n$ form $\spc{n-1}$ to $\spc{n}$ defined by 
$$T_n\left((\t_{n-1},x_{n-1}),\rmd (\t_n,x_n)\right) \eqdef \transt{n}(\t_{n-1},\rmd \t_n) \transx{n}(x_{n-1},\rmd x_n) \eqsp,$$
where $\transt{n}$ and $\transx{n}$ are the transition kernels of the $\Theta_n$ and $X_n$ processes from $\spt{n-1}$ to $\spt{n}$ and from $\spx{n-1}$ to $\spx{n}$ respectively.\\
Its initial distribution is given by 
$$\eta_{0}(\rmd(\t_0,x_0))  \eqdef \kact{0}(\rmd \t_0)\kacx{0}(\rmd x_0) \eqsp,$$
with $\kact{0} \in \psp(\spt{0})$ and $\kacx{0} \in \psp(\spx{0})$, denoting respectively the initial distributions of $\T_0$ and $X_0$ given $\T_0=\t_0$.\\
Let $(G_n)_{n \in \NN}$ be a collection of bounded measurable functions from $\spc{n}$ to $]0,\infty[$. We define the Feynman-Kac measure associated to the couple $(G_n,T_n)$ with initial distribution $\eta_0$ by 
\begin{multline}
 \mathbb{Q}_{\eta_0,n} \left(\rmd \left((\t_0,x_0),\ldots,(\t_n,x_n)\right)\right) \\
 \eqdef \frac{1}{\mathcal{Z}_n}\left\{\prod_{p=0}^{n-1} G_p(\t_p,x_p)\right\} \pathprobt{n} \left(\rmd (\t_0,\ldots,\t_n)\right)
 \pathprobx{n} \left(\rmd(x_0,\ldots,x_n)\right) \eqsp,\
\end{multline}
with the normalizing constant $\mathcal{Z}_n$, given by 
$$ \mathcal{Z}_n \eqdef \esp[\eta_0]{ \prod_{p=0}^{n-1} G_p(\Theta_p,X_p) } > 0,$$
and the two path probabilities
$$ \pathprobt{n} \left(\rmd(\t_0,\ldots,\t_n)\right) \eqdef \kact{0}(\rmd \t_0) \transt{1} (\t_0,\rmd\t_1) \ldots \transt{n}(\t_{n-1},\rmd \t_n),$$
and
$$ \pathprobx{n} \left(\rmd(x_0,\ldots,x_n)\right) \eqdef \kacx{0}(\rmd x_0) \transx{1}(x_0,\rmd x_1) \ldots \transx{n}(x_{n-1},\rmd x_n).$$
As one may have noticed, given $\T_{0:n}=\t_{0:n}$, the sequence $X_n$ is also a Markov chain of transition kernels $(\transx{n})_{n\in \nsetpos}$ and initial distribution $\kacx{0}$. Then one can associate to it another Feynman-Kac path measure which is called quenched.
\begin{dfntn}
\label{quenched Feynman-Kac path measure}
The quenched Feynman-Kac path measure associated to the realization $\Theta_{0:n}=\t_{0:n}$ is defined by 
$$\mathbb{Q}^X_{\t_{0:n},n} \left(\rmd(x_0,\ldots,x_n)\right) 
\eqdef \frac{1}{\mathcal{Z}^X_{\t_{0:n},n}} \left\{ \prod_{p=0}^{n-1} G_p(\t_p,x_p)\right\} \pathprobx{n} \left(\rmd(x_0,\ldots,x_n)\right) \eqsp,$$
where the quenched normalizing constant $\mathcal{Z}^X_{\t_{0:n},n}$ is given by 
$$\mathcal{Z}^X_{\t_{0:n},n} \eqdef \esp[\t_{0:n}]{ \prod_{p=0}^{n-1} G_p(\t_p,X_p)}>0.$$
\end{dfntn}
~\\
In the rest of the paper the quenched potential functions are denoted by $G_{\t_p,p}$ and defined as
\begin{equation} \label{eq:qpot}
G_{\t_p,p} : x_p \in \spx{p} \mapsto G_{\t_p,p}(x_p) \eqdef G_p(\t_p,x_p).
\end{equation}
To get further into the dynamic, one can define the time marginal of the quenched Feynman-Kac measure also called the quenched Feynman-Kac distribution.
\begin{dfntn}
\label{quenched Feynman-Kac flow}
For every realization $\Theta_{0:n}=\t_{0:n}$, the quenched Feynman-Kac distribution flow $\kacx{n}$ on $\spx{n}$ is defined for every $f_n \in \bmf(\spx{n})$ by 
$$\kacx{n} (f_n) \eqdef \unkacx{n} (f_n) / \unkacx{n}(\1)$$
with $\unkacx{n}(f_n) \eqdef \esp[\t_{0:n}]{f_n(X_n) \prod_{p=0}^{n-1}G_{\t_p,p}(X_p)} \eqsp.$
\end{dfntn}
The distribution of $X_n$ depends on the trajectory $\t_{0:n}$ which is emphasized by denoting the unnormalized quenched Feynman-Kac distribution by $\unkacx{n}$. 
An important result taken from [\cite{delmoral:2004}, Proposition 2.6.2] is recalled below.
\begin{prpstn}
 The quenched distribution sequence $(\kacx{n})_{n \in \NN}$ satisfies the non linear equation :
\begin{equation} \label{rec}
 \kacx{n+1} = \bgx{n}(\kacx{n}) \transx{n+1} \eqsp, 
\end{equation} 
where the mapping $\bgx{n}:\psp(\spx{n}) \rightarrow \psp(\spx{n+1})$ is given by 
\begin{equation} \label{def:bgx}
\bgx{n}(\kacx{n})(\rmd x_{n}) \eqdef \frac{1}{\kacx{n} (G_{\t_{n},n})} G_{\t_{n},n}(x_{n}) \kacx{n}(\rmd x_{n}) \eqsp. 
\end{equation}
Defining the mapping $\Phi^X_{n+1}$ by 
 \begin{align} \label{quenched non linear equation}
 \begin{array}{r c l}
  \Phi^X_{n+1} : \left(\spt{n} \times \spt{n+1} \right)\times \psp(\spx{n})& \rightarrow & \mathcal{P}(\spx{n+1})\\
  \left( (\t_{n},\t_{n+1}),\kacx{n} \right) & \mapsto & \bgx{n}(\kacx{n}) \, \transx{n+1} 
  \end{array}
\end{align}
The non linear recursion \eqref{rec} can be reformulated as 
\begin{equation} \label{eq:nonlin}
\kacx{n+1} =  \Phi^X_{n+1}\left( (\t_{n},\t_{n+1}),\kacx{n} \right) \eqsp.
\end{equation}
\end{prpstn}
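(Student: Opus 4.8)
The plan is to establish \eqref{rec} by evaluating both sides on an arbitrary test function $f_{n+1} \in \bmf(\spx{n+1})$, working first at the level of the \emph{unnormalized} quenched measures $\unkacx{n}$, where the Markov structure is most transparent, and only passing to the normalized flow $\kacx{n}$ at the very end. The reformulation \eqref{eq:nonlin} will then be immediate, since $\Phi^X_{n+1}$ in \eqref{quenched non linear equation} is defined to be exactly the composition $\bgx{n}(\kacx{n})\transx{n+1}$.

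First I would expand the left-hand side from \autoref{quenched Feynman-Kac flow}, writing
$$
\unkacx{n+1}(f_{n+1}) = \esp[\t_{0:n+1}]{f_{n+1}(X_{n+1}) \prod_{p=0}^{n} G_{\t_p,p}(X_p)} \eqsp.
$$
The crucial step is to peel off the terminal time using the Markov property of $X$ under the quenched law: conditionally on $\Theta_{0:n+1}=\t_{0:n+1}$, the chain evolves with transition $\transx{n+1}$ from time $n$ to time $n+1$, so conditioning on $X_{0:n}$ and integrating $X_{n+1}$ gives $\esp[\t_{0:n+1}]{f_{n+1}(X_{n+1}) \mid X_{0:n}} = (\transx{n+1} f_{n+1})(X_n)$. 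Since $\prod_{p=0}^{n} G_{\t_p,p}(X_p)$ is $\sigma(X_{0:n})$-measurable, and since the law of $X_{0:n}$ depends only on $\t_{0:n}$, the tower property yields
$$
\unkacx{n+1}(f_{n+1}) = \esp[\t_{0:n}]{G_{\t_n,n}(X_n)\,(\transx{n+1} f_{n+1})(X_n) \prod_{p=0}^{n-1} G_{\t_p,p}(X_p)} = \unkacx{n}\!\left(G_{\t_n,n}\cdot \transx{n+1} f_{n+1}\right) \eqsp,
$$
where I separated the $p=n$ factor and recognized the remainder as $\unkacx{n}$ applied to $x_n \mapsto G_{\t_n,n}(x_n)(\transx{n+1} f_{n+1})(x_n)$. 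The one point demanding care is the index bookkeeping: the product in $\unkacx{n+1}$ runs up to $p=n$ and therefore carries the extra potential $G_{\t_n,n}$, which must be retained and absorbed into the integrand of $\unkacx{n}$ rather than discarded.

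To conclude I would normalize. Taking $f_{n+1}=\1$ and using that $\transx{n+1}$ is a proper transition kernel, so $\transx{n+1}\1 = \1$, gives $\unkacx{n+1}(\1) = \unkacx{n}(G_{\t_n,n})$. Dividing, and then dividing numerator and denominator by $\unkacx{n}(\1)$ to pass from $\unkacx{n}$ to $\kacx{n}$, I obtain
$$
\kacx{n+1}(f_{n+1}) = \frac{\unkacx{n}\!\left(G_{\t_n,n}\cdot \transx{n+1} f_{n+1}\right)}{\unkacx{n}(G_{\t_n,n})} = \frac{\kacx{n}\!\left(G_{\t_n,n}\cdot \transx{n+1} f_{n+1}\right)}{\kacx{n}(G_{\t_n,n})} \eqsp.
$$
By \eqref{def:bgx} the right-hand side equals $\bgx{n}(\kacx{n})(\transx{n+1} f_{n+1}) = \left(\bgx{n}(\kacx{n})\transx{n+1}\right)(f_{n+1})$, and as $f_{n+1}$ was arbitrary this establishes \eqref{rec}.

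Since there is no genuine analytic difficulty here, the main obstacle is organizational rather than conceptual: the argument hinges entirely on carrying out the conditioning step consistently and on tracking the index shift in the potential product (the factor $G_{\t_n,n}$ being the sole survivor of the passage from $\unkacx{n+1}$ to $\unkacx{n}$), together with verifying that the normalizing constants $\unkacx{n}(\1)$ cancel so that the final identity is expressed through the normalized flow $\kacx{n}$ and the Boltzmann--Gibbs map $\bgx{n}$ rather than the unnormalized objects.
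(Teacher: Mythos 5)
Your proof is correct. The paper itself offers no proof of this proposition---it is quoted directly from Del Moral (2004), Proposition 2.6.2---and your argument (peeling off the terminal transition via the quenched Markov property at the level of the unnormalized flow, checking $\unkacx{n+1}(\1)=\unkacx{n}(G_{\t_n,n})$, and then normalizing to recognize the Boltzmann--Gibbs map $\bgx{n}$) is precisely the standard derivation given in that reference, so the two approaches coincide.
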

Remind that, for a fixed value $\theta_{0:n}$ of the random process $\Theta_{0:n}$, the probability measures $(\kacx{n})_{n\in \NN}$ can be approximated recursively thanks to an interacting particle system which evolves successively according to selection step with potentials $G_{\t_n,n}$ defined in \eqref{eq:qpot} and transition kernels $\transx{n}$.
See \cite{delmoral:2004} for further details.
Now, consider that the random environment $\Theta_{0:n}$, where the stochastic process $X_n$ evolves, is not known. Then we focus our interest on the estimation of the couple 
\begin{equation} \label{def:Xb}
\overline{X}_n \eqdef (\Theta_n,\kacX{n}) \in \spb{n} \eqdef (\spt{n} \times \psp (\spx{n}) ) \eqsp,
\end{equation}
made up of the environment and the law of the process evolving in this environment. The tricky part will be to deal with the probability measure space.
First, notice that, as it has been shown in \cite{delmoral:2004}, the pair process is a Markov chain.
\begin{prpstn}[\cite{delmoral:2004}, Proposition 2.6.3] \label{prop:MC_distrib}
 $\overline{X}_n$ is a Markov chain with transition kernel $\transb{n}$ defined for every function $\overline{f}_n \in \bmf(\spb{n})$ and $(u,\eta) \in \spb{n}$ by 
 $$\transb{n}(\overline{f}_n)(u,\eta) \eqdef \int_{\spt{n}} \transt{n} (u,\rmd v) \overline{f}_n(v, \Phi^X_n((u,v),\eta) )$$
 and with initial distribution $\overline{\eta}_0 \in \psp (\spb{0})$ defined by 
 $$\kacb{0}(\rmd(u,\nu)) \eqdef \kact{0} (\rmd u) \delta_{\kacx{0}} (\rmd \nu) \eqsp. $$
\end{prpstn}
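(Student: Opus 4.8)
The strategy is to read off the Markov property of $\Xb[n]=(\Theta_n,\kacX{n})$ directly from the deterministic recursion \eqref{eq:nonlin}. That recursion exhibits $\kacX{n}$ as a fixed measurable function of $(\Theta_{n-1},\Theta_n)$ and of $\kacX{n-1}$, namely $\kacX{n}=\Phi^X_n((\Theta_{n-1},\Theta_n),\kacX{n-1})$, so that all the randomness in the second coordinate of $\Xb[n]$ is carried by the environment trajectory. Since the transition kernel $T_n$ of $(\Theta_n,X_n)$ factorizes as $\transt{n}\otimes\transx{n}$, the environment alone is a Markov chain with kernel $\transt{n}$ and initial law $\kact{0}$ under the path probability $\pathprobt{n}$, and this is the only source of fresh randomness entering $\Xb[n]$ at each step; the plan is to condition on the environment filtration and then transfer the conclusion to the natural filtration of $\Xb[n]$.

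First I would fix the measurable structure by equipping $\psp(\spx{n})$ with the $\sigma$-field generated by the evaluation maps $\nu\mapsto\nu(f)$, $f\in\bmf(\spx{n})$, so that $\spb{n}=\spt{n}\times\psp(\spx{n})$ carries the product $\sigma$-field and $\transb{n}$ is a genuine kernel; this requires checking that $\Phi^X_n$ is jointly measurable, which follows from writing $\Phi^X_n((u,v),\eta)=\bgx{n-1}(\eta)\transx{n}$ and noting that the reweighting $\eta\mapsto\bgx{n-1}(\eta)$ and the pushforward by $\transx{n}$ are both evaluation-measurable. Iterating \eqref{eq:nonlin} from the non-random value $\kacX{0}=\kacx{0}$ shows that each $\kacX{k}$ is $\sigma(\Theta_{0:k})$-measurable, whence $\filt{n-1}\subseteq\sigma(\Theta_{0:n-1})$. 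The core computation is then to evaluate, for $\overline{f}_n\in\bmf(\spb{n})$,
\[
\cexp{\overline{f}_n(\Xb[n])}{\sigma(\Theta_{0:n-1})}
=\cexp{\overline{f}_n\bigl(\Theta_n,\Phi^X_n((\Theta_{n-1},\Theta_n),\kacX{n-1})\bigr)}{\sigma(\Theta_{0:n-1})}\eqsp.
\]
Freezing $\Theta_{n-1}$ and $\kacX{n-1}$, which are $\sigma(\Theta_{0:n-1})$-measurable, and using that the conditional law of $\Theta_n$ given $\sigma(\Theta_{0:n-1})$ is $\transt{n}(\Theta_{n-1},\cdot)$, this quantity equals $\int_{\spt{n}}\transt{n}(\Theta_{n-1},\rmd v)\,\overline{f}_n(v,\Phi^X_n((\Theta_{n-1},v),\kacX{n-1}))=\transb{n}(\overline{f}_n)(\Xb[n-1])$.

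Since the right-hand side is $\filt{n-1}$-measurable and $\filt{n-1}\subseteq\sigma(\Theta_{0:n-1})$, the tower property yields $\cexp{\overline{f}_n(\Xb[n])}{\filt{n-1}}=\transb{n}(\overline{f}_n)(\Xb[n-1])$, which is exactly the asserted Markov property with kernel $\transb{n}$. For the initial law, $\Xb[0]=(\Theta_0,\kacX{0})$ with $\Theta_0\sim\kact{0}$ while $\kacX{0}$ equals the fixed initial measure $\kacx{0}$, so the distribution of $\Xb[0]$ is $\kact{0}(\rmd u)\,\delta_{\kacx{0}}(\rmd\nu)=\kacb{0}$, as claimed. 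I expect the only genuinely technical point to be the measurability of $\Phi^X_n$ on the measure space, i.e.\ verifying that $\transb{n}$ is a bona fide kernel on $\spb{n}$; once the deterministic recursion is identified, the probabilistic content reduces to conditioning on the environment chain together with the tower property and is routine.
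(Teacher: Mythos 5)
Your proof is correct and follows essentially the same route as the paper: the paper cites this proposition from Del Moral without proof, but its proof of the directly analogous statement for $\Xti[n]$ proceeds exactly as you do, by using the recursion to write the measure-valued coordinate as a deterministic function of the previous state and $\Theta_n$ and then conditioning on the filtration. Your additional care about the measurability of $\Phi^X_n$ and the tower-property step through $\sigma(\Theta_{0:n-1})$ only makes explicit what the paper leaves implicit.
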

To this Markov chain, one may associate the Feynman-Kac distribution flow $\kacb{n}$ defined for every $\fb_n \in \bmf (\spb{n})$ by 
\begin{align}\label{Feynman-Kac flow in distribution}
\kacb{n}(\fb_n) \eqdef \unkacb{n} (\fb_n) / \unkacb{n} (\1)
\end{align}
where
$$ \unkacb{n} (\fb_n) \eqdef \esp[\kacb{0}]{\fb_n(\overline{X}_n) \prod_{p=0}^{n-1}\overline{G}_p(\overline{X}_p)} \eqsp,$$
and the functions $\overline{G}_p$ are non negative functions defined as follows :
\begin{align}\label{potential on distribution}
\begin{array}{r c l}
\overline{G}_p : \spb{p} & \rightarrow & [0,\infty[\\
  (u,\eta) & \mapsto & \overline{G}_p(u,\eta) = \int_{\spx{p}} G_p(u,x)\eta(\rmd x) = \int_{\spx{p}} G_{u,p}(x) \eta(\rmd x) = \eta(G_{u,p}) \eqsp.
\end{array}
\end{align}
\begin{prpstn}[\cite{delmoral:2004}, p. 86] \label{prop:non linear equation distribution}
For all $n \in \NN$, the sequence $\kacb{n}$ satisfies the following non linear recursive equation :
\begin{equation} \label{eq:recb}
\kacb{n+1} = \bgb{n}(\kacb{n}) \transb{n+1} = \overline{\Phi}_{n+1} (\kacb{n}) \eqsp,
\end{equation}
where for every $\mu \in \psp (\spb{n})$, the application $\bgb{n} : \psp (\spb{n}) \rightarrow \psp (\spb{n})$, is defined by
\begin{equation} \label{eq:bgb}
  \bgb{n}(\mu)(\fb_n) = \mu(\overline{G}_n \fb_n) / \mu(\overline{G}_n)\eqsp,  \quad (\forall \fb_n \in \bmf (\spb{n})) \eqsp,
\end{equation}
and the operator $\overline{\Phi}_n$ is defined by 
 \begin{align*}
 \begin{array}{r c l}
  \overline{\Phi}_{n+1} : \psp (\spb{n}) & \rightarrow & \psp(\spb{n+1}) \\
  \mu & \mapsto &  \bgb{n}(\mu) \transb{n+1} \eqsp.
  \end{array}
 \end{align*}
\end{prpstn}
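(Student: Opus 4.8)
The plan is to reduce the statement to the classical Feynman--Kac updating identity, which holds for any normalized flow built from a potential--kernel pair $(\potb{n},\transb{n+1})$ over a Markov chain; the only special feature here is that the underlying chain $\Xb[n]$ lives in the measure-valued state space $\spb{n} = \spt{n} \times \psp(\spx{n})$, but since \autoref{prop:MC_distrib} already grants that $\Xb[n]$ is a Markov chain with transition $\transb{n}$, the argument never needs the explicit form of that kernel. Concretely, I would first establish a linear recursion for the unnormalized flow $\unkacb{n}$ and then divide through to obtain the normalized equation \eqref{eq:recb}.

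First I would prove that for every $\fb_{n+1} \in \bmf(\spb{n+1})$,
\begin{equation} \label{eq:plan-unnorm}
\unkacb{n+1}(\fb_{n+1}) = \unkacb{n}\bigl(\potb{n} \, \transb{n+1}(\fb_{n+1})\bigr).
\end{equation}
Starting from $\unkacb{n+1}(\fb_{n+1}) = \esp[\kacb{0}]{\fb_{n+1}(\Xb[n+1]) \prod_{p=0}^{n} \potb{p}(\Xb[p])}$, I isolate the last potential factor $\potb{n}(\Xb[n])$ and note that the whole product $\prod_{p=0}^{n} \potb{p}(\Xb[p])$ is $\filt{n}$-measurable. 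Conditioning on $\filt{n}$ and invoking the Markov property of \autoref{prop:MC_distrib}, namely $\cexp{\fb_{n+1}(\Xb[n+1])}{\filt{n}} = \transb{n+1}(\fb_{n+1})(\Xb[n])$, then pulling the measurable factor out of the conditional expectation and taking total expectation, yields \eqref{eq:plan-unnorm}.

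The second step is to normalize. Because $\transt{n+1}$ is a transition kernel, the definition of $\transb{n+1}$ in \autoref{prop:MC_distrib} gives $\transb{n+1}(\1) = \1$, so applying \eqref{eq:plan-unnorm} with $\fb_{n+1} = \1$ shows $\unkacb{n+1}(\1) = \unkacb{n}(\potb{n})$. Dividing the two instances of \eqref{eq:plan-unnorm} and cancelling the common factor $\unkacb{n}(\1)$ between numerator and denominator turns $\unkacb{n}$ into $\kacb{n}$, giving
\[
\kacb{n+1}(\fb_{n+1}) = \frac{\kacb{n}\bigl(\potb{n} \, \transb{n+1}(\fb_{n+1})\bigr)}{\kacb{n}(\potb{n})} = \bgb{n}(\kacb{n})\bigl(\transb{n+1}(\fb_{n+1})\bigr) = \bigl[\bgb{n}(\kacb{n}) \transb{n+1}\bigr](\fb_{n+1}),
\]
where the middle equality is just the definition \eqref{eq:bgb} of $\bgb{n}$ applied to the function $\transb{n+1}(\fb_{n+1})$. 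Since $\fb_{n+1}$ is arbitrary this is exactly \eqref{eq:recb}, and identifies the right-hand side with $\overline{\Phi}_{n+1}(\kacb{n})$. The base case $n=0$ needs no separate treatment, as \eqref{eq:plan-unnorm} is valid from $n=0$ onward.

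I do not expect a genuine obstacle: the computation is the standard selection-then-mutation recursion (selection via $\bgb{n}$, mutation via $\transb{n+1}$). The only points demanding care are measure-theoretic rather than conceptual, namely that all objects involved --- the potentials $\potb{p}(u,\eta) = \eta(G_{u,p})$, the test functions $\fb_n$, and the kernel $\transb{n+1}$ --- are genuinely measurable on the infinite-dimensional space $\spb{n}$, so that the conditional-expectation manipulation is legitimate. This is ensured once $\psp(\spx{n})$ carries its usual weak-convergence $\sigma$-field, and I would simply record that the requisite measurability is inherited from the construction underlying \autoref{prop:MC_distrib}.
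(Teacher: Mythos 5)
Your argument is correct and is exactly the standard conditioning-on-$\sigma(\Xb[0],\ldots,\Xb[n])$ derivation of the Feynman--Kac selection/mutation recursion; the paper itself gives no proof, deferring to \cite{delmoral:2004}, and the proof there proceeds in the same way (unnormalized linear recursion via the Markov property of \autoref{prop:MC_distrib}, then normalization using $\transb{n+1}(\1)=\1$ and positivity of the potentials). Your closing remark about measurability on $\spb{n}=\spt{n}\times\psp(\spx{n})$ is the right caveat and is indeed all that distinguishes this instance from the finite-dimensional case.
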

In the non linear case, \eqref{eq:recb} cannot be solved analytically. Therefore, in the next section, we introduce an interacting particle system to approximate recursively the sequence of Feynman-Kac probability measures $(\kacb{n})_{n \in \NN}$.

%%%%%%%%%%%%%%%%%%%%%%%%%%%%%%%%%%%%%%%%%%%%%%%%%%%%%%%%%%%%%%%%%%%%%%%%%%%%

\section{Algorithm derivation} \label{sec:algo}
This section is about the algorithm associated with the Feynman-Kac distribution flow $\kacb{n}$ defined in \eqref{Feynman-Kac flow in distribution}.
One considers the process $\Xb[n]$ associated with the pair $(\overline{G}_n,\transb{n})$, where the transition kernel $\transb{n}$ is defined in \autoref{prop:MC_distrib} and the potential function $\overline{G}_n$ is defined in \eqref{potential on distribution}.

\subsection{Idealized interacting particle model} \label{sec:IKF}
Let $\Nis$ be some positive integer. A $\Nis$-interacting particle system  associated with the sequence $((\overline{G}_n,\transb{n}))_{n \in \NN}$ and the initial distribution $\kacb{0}$, is a sequence of non-homogeneous Markov chain, denoted by $ \Xb[n]^{[\Nis]}$, taking value in the product space $\spb{n}^{\Nis}$,
$$ \Xb[n]^{[\Nis]} \eqdef (\Xb[n]^{i})_{i=1}^{\Nis} = (\Xb[n]^{1},\ldots,\Xb[n]^{\Nis}) \in \spb{n}^{\Nis}\eqdef\underbrace{\spb{n}\times\ldots\times\spb{n}}_{\Nis\text{ times}} \eqsp. $$
The initial state of the Markov chain $\Xb[0]^{[\Nis]}$ consists in $\Nis$ independent random variables with common distribution $\kacb{0}$.
The interacting particle system $(\Xb[n]^{i})_{i=1}^{\Nis} $ explores the state space $\spb{n}$ and with the dynamic given to it, empirically samples the law $\kacb{n}$. 
Each particle $i$ of the system consists in a random variable $\Xb_n^i=(\theta^i_n,\kacx[i]{n}) \in \spb{n} \eqsp.$
Therefore, the empirical process $\kacb{n}^{\Nis}$ is defined by
\begin{equation}\label{def:kacb}
 \kacb{n}^{\Nis}\eqdef \frac{1}{\Nis}\sum_{i=1}^{\Nis}\delta_{\Xb[n]^i}.
\end{equation}
 The elementary transition of the Markov chain $ \Xb[n]^{[\Nis]}$ from $\spb{n}^{\Nis}$ to $\spb{n+1}^{\Nis}$ is given for any $\xb[n]^{[\Nis]} \eqdef (\xb[n]^1, \ldots, \xb[n]^{\Nis}) \in \spb{n}^{\Nis}$ by 
\begin{align*}
\mathbb{P}^{\Nis}_{\kacb{0}} \left(\Xb[n+1]^{[\Nis]} \in \rmd \xb[n+1]^{[\Nis]} \, | \, \Xb[n]^{[\Nis]}\right)
&\eqdef \prod_{i=1}^{\Nis} \overline{\Phi}_{n+1} (\kacb{n}^{\Nis}) (\rmd \xb[n+1]^i) \\ 
&= \prod_{i=1}^{\Nis} \sum_{j=1}^{\Nis} \dfrac{\overline{G}_n(\Xb[n]^j) }{\sum_{k=1}^{\Nis} \overline{G}_n(\Xb[n]^k)} \transb{n+1} (\Xb[n]^j, \rmd \xb[n+1]^i) \eqsp,\text{ thanks to \eqref{eq:recb}} \eqsp.
\end{align*}
Thus, the evolution of the particle swarm consists in two steps : a selection and a mutation. 
In the selection step, the particles $(\Xb[n]^i)_{i=1}^{\Nis}$ are selected multinomially with probability proportional to their potentials $(\overline{G}_n(\Xb[n]^i))_{i=1}^{\Nis}$. Selected particles are identified with a hat on \autoref{particle exact measure}. Then the mutation step is performed independently using the kernel $\transb{n+1}$. The evolution scheme of the particles is illustrated on \autoref{particle exact measure}.
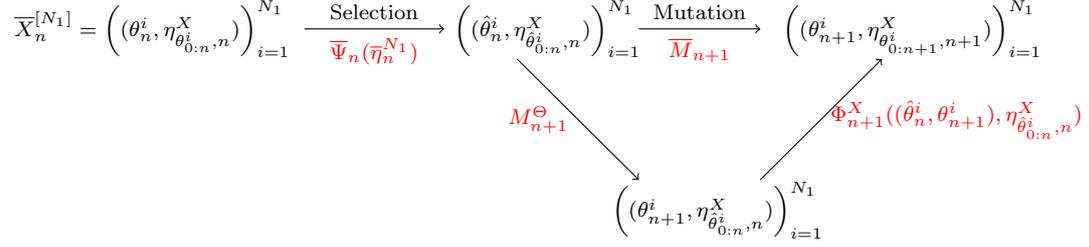
\begin{figure}[!h]
 \begin{tikzpicture}[scale=0.8]
 \draw (0,0) node {\footnotesize{$\Xb[n]^{[\Nis]} = \left((\t^{i}_{n},\kacx[i]{n})\right)_{i=1}^{\Nis}$}};
 \draw (6.5,0) node {\footnotesize{$\left((\hat{\t}^{i}_{n},\eta^{X}_{\hat{\theta}^{i}_{0:n},n})\right)_{i=1}^{\Nis}$}};
 \draw (12.5,0) node {\footnotesize{$\left((\t^{i}_{n+1},\kacx[i]{n+1})\right)_{i=1}^{\Nis}$}};
 \draw [->] (2.5,0) -- (4.8,0) node [midway,above] {\footnotesize{Selection}} node[midway, below] {\footnotesize{\color{red}$\bgb{n}(\kacb{n}^{\Nis})$}};
 \draw [->] (8,0) -- (10,0) node [midway, above] {\footnotesize{Mutation}}  node[midway, below] {\footnotesize{\color{red} $\transb{n+1}$}};
 \draw (9.3,-3) node {\footnotesize{$\left((\t^{i}_{n+1},\eta^{X}_{\hat{\theta}^{i}_{0:n},n})\right)_{i=1}^{\Nis}$}};
 \draw [->] (6,-0.5) -- (8,-2.5) node [midway, left] {\footnotesize{{\color{red}$\transt{n+1}$}}};
 \draw [->] (10,-2.5) -- (12,-0.5) node [midway, right] {\footnotesize{{\color{red}$\Phi_{n+1}^X((\hat{\t}^{i}_{n},\t^{i}_{n+1}),\eta^{X}_{\hat{\theta}^{i}_{0:n},n})$}}};
 \end{tikzpicture}
\caption{Evolution scheme of the interacting particle system for exact measures.}
\label{particle exact measure}
\end{figure}
Using this algorithm one can empirically sample the measure $\kacb{n}$ at each time step $n$. Several results are available to qualify the subsequent estimator. However, as one may have noticed, for each $\t^i_n$ the measure $\kacx[i]{n}$ corresponds to the quenched distributions defined in \eqref{quenched Feynman-Kac flow}. That means that one should have the exact quenched measure associated with the parameter realization $\t^i_{0:n}$ to use that standard particle algorithm. This can happen in two special cases.

Firstly, one special case is when the transition kernel $\transx{n}$ is Gaussian and the initial distribution $\kacx{0}$ is Gaussian. Indeed, it turns out that this particle algorithm corresponds to the interacting Kalman filters (IKF) (see \cite{delmoral:2004}, \cite{zghal:mevel:delmoral:2014}). That is a $N_1$-interacting particle model which is composed of $N_1$ particles where the measure value part are Gaussian distributions. In other words, for each particle $\t^i_n$, one iterative step of the Kalman filter is run to update the measure, \textit{i.e.} one prediction step and one correction step. Those filters are then competing through the selection step using the transformation $\bgb{n}$ defined in \eqref{eq:bgb}.
For example let us consider the case where $\Theta_{n}$ is a $\spt{n}$-process with initial distribution $\kact{0}$ and elementary transition kernel $\transt{n}$. For a realization $\t_{0:n}$ of  $\Theta_{0:n}$, consider that $(X_n,Y_n)$ is a $\mathbb{R}^{p+q}$-Markov chain, for positive integers $(p, q)$, defined through the linear Gaussian system :
\begin{align*}
 \left\lbrace
 \begin{array}{l l l}
  X_n &=& A_{\t_n,n} \, X_{n-1}+a_{\t_n,n} + B_{\t_n,n} \, \varepsilon^X_n\eqsp, \quad n\geq 1 \\
  Y_n &=& C_{\t_n,n} \, X_n+c_{\t_n,n} + D_{\t_n,n} \, \varepsilon^Y_n\eqsp, \quad n\geq 0 \eqsp.
 \end{array}
 \right.
\end{align*}
$(A_{\t_n,n}, B_{\t_n,n}, C_{\t_n,n}, D_{\t_n,n})$ and $(a_{\t_n,n}, c_{\t_n,n})$ are respectively matrices and deterministic vectors of appropriate dimension which may depend on a parameter $\t_n$.
The sequences $\varepsilon^X_n$ and $\varepsilon^Y_n$ are two independent white noises, independent from the initial condition $X_0$. 
There are Gaussian random variables whose mean and variance are given by
$$ X_0 \sim\normdist(m_{\t_0,0},\Sigma_{\t_0,0}) \eqsp, \quad \varepsilon^X_n \sim \normdist(0,\Sigma^X_n)\eqsp, \quad \text{and}~ \varepsilon^Y_n \sim \normdist(0,\Sigma^Y_n) \eqsp.$$
In this framework, $\kacx{n}$ corresponds to the conditional law of $X_n$ given the observations $Y_{0:n-1} =y_{0:n-1}$ and the history of the parameter $\theta_{0:n}$, also called optimal predictor. 
One wants to estimate recursively the law of the couple $(\Theta_n,\kacx{n})$ using observations $Y_{0:n-1} =y_{0:n-1}$.
For that purpose, one needs to introduce the optimal filtering which is the conditional law of $X_n$ given the observations $Y_{0:n} =y_{0:n}$ and the history of the parameter $\theta_{0:n}$.
It turns out that these previous distributions are Gaussian respectively denoted by $\kacx{n}=\normdist(m_{\t_n,n},\Sigma_{\t_n,n})$ and $\normdist(\hat{m}_{\t_n,n},\hat{\Sigma}_{\t_n,n})$.
Thus, 
\begin{align*}
\hat{m}_{\t_{n},n} &= \cexp[\t_{0:n}]{X_{n}}{Y_{0:n}} \\
\hat{\Sigma}_{\t_{n},n} &= \esp[\t_{0:n}]{(X_{n}-\hat{m}_{\t_{n},n})(X_{n}-\hat{m}_{\t_{n},n})^T} \\
m_{\t_{n+1},n+1} &= \cexp[\t_{0:n+1}]{X_{n+1}}{Y_{0:n}} \\
\Sigma_{\t_{n+1},n+1} &= \esp[\t_{0:n+1}]{(X_{n+1}-m_{\t_{n+1},n+1})(X_{n+1}-m_{\t_{n+1},n+1})^T} \eqsp.
\end{align*}
Moreover, the mapping $\Phi^X_{n+1}$ defined in \eqref{quenched non linear equation} which is used to update the measure valued part $\kacx{n}$ corresponds to a complete step of the Kalman filter evolution between predictors. This means that 
$\Phi^X_{n+1}((\t_n,\t_{n+1}),\normdist(m_{\t_n,n},\Sigma_{\t_n,n}))$ is also a Gaussian distribution whose mean and covariance matrix are obtained recursively through two steps:
$$
\normdist(m_{\t_n,n},\Sigma_{\t_n,n}) \xrightarrow[]{\text{Correction}} 
\normdist(\hat{m}_{\t_n,n},\hat{\Sigma}_{\t_n,n}) \xrightarrow[]{\text{Prediction}} 
\normdist(m_{\t_{n+1},n+1},\Sigma_{\t_n,n}) \eqsp.
$$
The first one is a correction step which is given by 
$$\left\lbrace\begin{array}{lll}
\hat{m}_{\t_n,n} &=& m_{\t_n,n} + K_{\t_n,n}(Y_n-(C_{\t_n,n} m_{\t_n,n} + c_{\t_n,n}))\\
\hat{\Sigma}_{\t_n,n} &=& (I - K_{\t_n,n} C_{\t_n,n}) \Sigma_{\t_n,n}
\end{array}\right.$$
where $I$ is the identity matrix and $K_{\t_n,n}$ is the classical gain matrix 
$$K_{\t_n,n} \eqdef \Sigma_{\t_n,n} (C_{\t_n,n})^T\left(C_{\t_n,n}\Sigma_{\t_n,n}(C_{\t_n,n})^T + D_{\t_n,n} \Sigma^Y_n (D_{\t_n,n})^T \right)^{-1} \eqsp.$$ 
The second step is the predicting step :
$$\left\lbrace\begin{array}{lll}
m_{\t_{n+1},n+1} &=& A_{\t_{n+1},n+1}\hat{m}_{\t_n,n}+a_{\t_{n+1},n+1}\\
\Sigma_{\t_{n+1},n+1} &=& A_{\t_{n+1},{n+1}}\hat{\Sigma}_{\t_n,n} (A_{\t_{n+1},n+1})^T+B_{\t_{n+1},n+1}\Sigma^X_{n+1}(B_{\t_{n+1},n+1})^T \eqsp.
\end{array}\right.$$
Then all the Kalman filters attached to each realization $\t^i_{n+1}$ for $i \in \intvect{1}{\Nis}$ interact through their potential $\bar{G}_{n+1}(\theta^i_{n+1},\kacx[i]{n+1})$ defined in \eqref{potential on distribution} by
\begin{align*}
\bar{G}_{n+1}(\theta^i_{n+1},\kacx[i]{n+1}) &= \kacx[i]{n+1}(G_{\t^i_{n+1},n+1})\\
 &= \normdist(m_{\t^i_{n+1},n+1},\Sigma_{\t^i_{n+1},n+1})(G_{\t^i_{n+1},n+1})
\end{align*}
where $G_{\t^i_{n+1},n+1}$ is the likelihood function defined for every $x_{n+1}\in\spx{n+1}$ by 
$$G_{\t^i_{n+1},n+1}(x_{n+1}) = \frac{\rmd\normdist(C_{\t^i_{n+1},n+1}x_{n+1},\Sigma^Y_{n+1})}{\rmd\normdist(0,\Sigma^Y_{n+1})} \eqsp.$$
One finally ends up with the following expression:
\begin{multline}\label{def:pot:IKF}
 \bar{G}_{n+1}(\theta^i_{n+1},\kacx[i]{n+1}) \\
 = \frac{\rmd\normdist(C_{\t^i_{n+1},n+1}m_{\t^i_{n+1},n+1},C_{\t^i_{n+1},n+1}\Sigma_{\t^i_{n+1},n+1}(C_{\t^i_{n+1},n+1})^T+\Sigma^Y_{n+1})}{\rmd\normdist(0,\Sigma^Y_{n+1})} \eqsp.
\end{multline}
See \cite{delmoral:2004} for further details.
The interacting Kalman filter for this general example is given by Algorithm \ref{IKF}. \\
 \begin{algorithm}[H]  \label{IKF} 
	\KwData{$\kacb{0}$, $(\transb{p})_{p=0}^n$, $(\bgb{p})_{p=0}^n$, $m_{\t^i_0,0}$ and $\Sigma_{\t^i_0,0}$ }
	\KwResult{Interacting Kalman approximation of $\kacb{n}$}
  \tcc{Initialization} %$p=0$ \;
  \For{$i \gets 1$ \KwTo ${\Nis}$}{
  Sample $\tilde{X}_0^i=(\t_0^i,\kacx[i]{0})\sim\kacti{0}$, \textit{i.e.} 
  $\t^i_0\stackrel{i.i.d}{\sim}\eta^{\Theta}_0$ and $\kacx[i]{0}=\normdist(m_{\t^i_0,0},\Sigma_{\t^i_0,0})$ \;
  }
  \For{$p \gets 0$ \KwTo $n-1$}{
  \tcc{Selection of Kalman filters}
  Sample $I_p=(I_p^i)_{i=1}^{N_1}$ according to a multinomial distribution with probability proportional to $ \left(\bar{G}_p(\theta^k_p,\kacx[k]{p})\right)_{k=1}^{N_1} $ given by \eqref{def:pot:IKF} \;
  \For{$i \gets 1$ \KwTo $\Nis$}{
  \tcc{Updating step for each Kalman filter}
  $\left\lbrace\begin{array}{lll}
\hat{m}_{\t^{I_p^i}_p,p} &=& m_{\t^{I_p^i}_p,p} + K_{\t^{I_p^i}_p,p}(Y_n-C_{\t^{I_p^i}_p,p} m_{\t^{I_p^i}_p,p})\\
\hat{\Sigma}_{\t^{I_p^i}_p,p} &=& \left(I - K_{\t^{I_p^i}_p,p} C_{\t^{I_p^i}_p,p}\right)\Sigma_{\t^{I_p^i}_p,p}
\end{array}\right.$ \;
  \tcc{Mutation of each island}
  Sample independently $\theta^i_{p+1}$ according to $M^{\Theta}_{p+1}(\theta^{I^i_p}_p,.)$ \;
  \tcc{Prediction step for each Kalman filter}
  $ \left\lbrace\begin{array}{lll}
m_{\t^i_{p+1},p+1} &=& A_{\t^i_{p+1},p+1}\hat{m}_{\t^{I_p^i}_p,p}+a_{\t^i_{p+1},p+1}\\
\Sigma_{\t^i_{p+1},p+1} &=& A_{\t^i_{p+1},{p+1}}\hat{\Sigma}_{\t^{I_p^i}_p,p} (A_{\t^i_{p+1},p+1})^T+B_{\t^i_{p+1},p+1}\Sigma^X_{p+1}(B_{\t^i_{p+1},p+1})^T
\end{array}\right.$
  }
  $p\longleftarrow p+1$ 
  }
   \caption{Interacting Kalman Filter - IKF}
  \end{algorithm}
Secondly, when the non linear \autoref{quenched non linear equation} can be solved analytically \textit{i.e.} when one has access to the exact measure $\kacx{n}$, one can apply a simple interacting particle model as described in \autoref{particle exact measure}, where each particle corresponds to the pair: parameter and exact measure.\\
However, in most cases, this equation cannot be solved analytically, so that an additional approximation is needed in order to estimate the measure $\kacx[i]{n}$ for each $i \in \intvect{1}{\Nis}$. The next subsection is dedicated to the derivation of an algorithm to deal with this constraint.

\subsection{Labeled island particle model}
To tackle the case where $\kacx[i]{n}$, $i \in \intvect{1}{\Nis}$ is not analytically known, the idea consists in using a particle estimation of $\kacx[i]{n}$ inside the previous interacting particle model. The ensuing algorithm will be called \textit{labeled island particle model} in reference to the island particle model developed in \cite{verge:dubarry:delmoral:moulines:2013}, even if in the present case, each island $i$  have a label $\theta^i_n$ whose evolution is given by the Markov kernel $\transt{n}$.
The labeled island particle model consists in associating to each term of the sequence $(\t^i_n)_{i=1}^{\Nis}$ a sub $\Nin$-interacting particle system.
We call sub $\Nin$-interacting particle system associated with the sequence $((G_{\t^i_n,n},\transx[i]{n}))_{n \in \NN}$ and the initial distribution $\kacx[i]{0}$, 
the sequence of non-homogeneous Markov chain $(\xi_n^{i,j})_{j=1}^{\Nin}$ taking value in the product space $\spx[\Nin]{n}$, that is :
$$\xi^{i,[\Nin]}_n \eqdef (\xi_n^{i,j})_{j=1}^{\Nin} \eqdef (\xi_n^{i,1},\ldots,\xi_n^{i,\Nin}) \in \spx[\Nin]{n}\eqdef\underbrace{\spx{n}\times\ldots\times\spx{n}}_{\Nin\text{ times}} \eqsp.$$
The initial state of the Markov chain $(\xi_0^{i,j})_{j=1}^{\Nin}$ consists in sampling $\Nin$ independent random variables with common distribution $\kacx[i]{0}$.\\
The interacting particle system, denoted by $(\xi_n^{i,j})_{j=1}^{\Nin}$, explore the state space $\spx{n}$ and with the dynamic given to it, empirically sample the law $\kacx[i]{n}$.\\
Denoting the empirical measure 
\begin{equation} \label{def:kacxN}
\kacxN[i]{n} \eqdef \frac{1}{\Nin}\sum\limits_{j=1}^{\Nin}\delta_{\xi_n^{i,j}} \eqsp,
\end{equation}
the elementary transition of the process $\xi_n^{i,[\Nin]}$ from $\spx[\Nin]{n}$ to $\spx[\Nin]{n+1}$ is given for any $x_n^{[\Nin]}=(x^1_n, \ldots, x^{\Nin}_n) \in \spx[\Nin]{n}$ by 
\begin{align*} 
\PP^{\Nin}_{\kacx{0}} \left(\xi_{n+1}^{i,[\Nin]} \in \rmd x_{n+1}^{[\Nin]} \, | \, \xi_{n}^{i,[\Nin]}\right) 
& \eqdef \prod_{j=1}^{\Nin} \Phi^X_{n+1} \left((\t_{n}^i ,\t_{n+1}^i),\kacxN[i]{n} \right) (\rmd x^j_{n+1}) \\
&= \prod_{j=1}^{\Nin}  \bgx[i]{n} (\kacxN[i]{n}) \transx[i]{n+1} (\xi^{i,j}_{n}, \rmd x^j_{n+1}) \quad \text{using \eqref{quenched non linear equation}}  \\
&= \prod_{j=1}^{\Nin}  \sum_{k=1}^{\Nin} \dfrac{G_{\t_{n}^i,n}(\xi^{i,k}_{n}) }{\sum_{\ell=1}^{\Nin} G_{\t_{n}^i,n}(\xi^{i,\ell}_{n}) } \transx[i]{n+1} (\xi^{i,k}_{n}, \rmd x^j_{n+1}) \quad \text{by \eqref{def:bgx}}   \eqsp.
\end{align*}
Define the mapping $\tilde{\Phi}^X_n$ by 
\begin{align*}
 \begin{array}{r c l}
  \tilde{\Phi}^X_n : \spt{n-1} \times \spt{n} \times \psp (\spx{n-1}) & \rightarrow & \psp(\spx{n}) \\
  ((u,v),\nu) & \mapsto & \prod_{j=1}^{\Nin} \Phi^X_{n}((u,v),\nu)(\rmd x^j_n) \eqsp,
  \end{array}
 \end{align*}
then
\begin{equation} \label{def:phitiX}
\kacxN[i]{n+1}  = \tilde{\Phi}^X_{n+1} \left((\t_{n}^i ,\t_{n+1}^i),\kacxN[i]{n} \right) \eqsp.
\end{equation}
So, the evolution of the particle swarm $\xi_{n}^{i,[\Nin]}$ consists in two steps: a selection and a mutation. In the selection step, the particles are selected multinomially with probability proportional to their potentials $\left(G_{\t_{n}^i,n}(\xi^{i,j}_{n})\right)_{j=1}^{\Nin}$. Then the mutation step is performed independently using the kernel $\transx[i]{n+1}$. Hence, at each iteration $n \in \NN$, the empirical measure $\kacxN[i]{n}$ approximates $\kacx[i]{n}$ when $\Nin$ tends to $\infty$. Replacing $\kacx[i]{n}$ by $\kacxN[i]{n}$
inside the first algorithm presented, one gets a nested particle model named \textit{labeled island particle model}.\\
In order to derive precisely this algorithm, first introduce the following sequence $\Xti[n]$ on $\spb{n} = \spt{n} \times \psp (\spx{n})$, defined by $\Xti[n] \eqdef (\T_n,\kacXTN{n})$, \textit{i.e.} the couple environment and empirical measure of the process $X_n$ conditionally on $\T_{0:n}$, where $\kacXTN{n} \eqdef \sum_{j=1}^{\Nin} \delta_{\xi^{j}_n} / \Nin$. \\
\begin{prpstn}
$\Xti[n]$ is a $\spb{n}$-Markov chain with transition kernel $\transti{n}$ defined for every function $\fb_n \in \bmf(\spb{n})$ and $(u,\nu) \in \spb{n}$ by
\begin{equation} \label{def:transti}
\transti{n} (\fb_n)(u,\nu) = \int_{\spt{n}} \transt{n}(u,\rmd v) \fb_n(v,\tilde{\Phi}^X_n((u,v),\nu)) \eqsp,
\end{equation}
where $\tilde{\Phi}^X_n$ is defined in \eqref{def:phitiX}, and with initial distribution $\kacti{0} \in \psp (\spb{0})$ given by
$$\kacti{0}(\rmd(u,\nu)) \eqdef \kact{0}(\rmd u) \delta_{\eta^{X,\Nin}_{\theta_0,0}}(\rmd \nu) \eqsp.$$
 \end{prpstn}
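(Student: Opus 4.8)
The plan is to imitate the proof of \autoref{prop:MC_distrib}, its exact-measure counterpart, replacing the quenched flow $\kacx{n}$ and the map $\Phi^X$ there by their particle analogues $\kacXTN{n}$ and $\tilde{\Phi}^X$, and to check the two defining properties of a Markov chain with kernel $\transti{n}$: that $\Xti[0]$ has law $\kacti{0}$, and that the conditional law of $\Xti[n+1]$ given the past depends on it only through $\Xti[n]$ and equals $\transti{n+1}$. The initialization is immediate: $\T_0\sim\kact{0}$ and, conditionally on $\T_0=\t_0$, the cloud $(\xi^j_0)_{j=1}^{\Nin}$ consists of $\Nin$ i.i.d.\ draws from $\eta^X_{\t_0,0}$, so $\kacXTN{0}=\Nin^{-1}\sum_{j=1}^{\Nin}\delta_{\xi^j_0}=\eta^{X,\Nin}_{\theta_0,0}$ and the law of $\Xti[0]=(\T_0,\kacXTN{0})$ is precisely $\kacti{0}$.

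For the transition, I would condition not on $\filttilde{n}$ but on the richer $\sigma$-field $\Ga_n$ generated by the whole labeled swarm $(\xi^j_p)_{1\le j\le\Nin,\,0\le p\le n}$ together with the parameter path $\T_{0:n}$; since $\Xti[n]$ is $\filttilde{n}$-measurable and $\filttilde{n}\subseteq\Ga_n$, it suffices to show that $\cexp{\fb_{n+1}(\Xti[n+1])}{\Ga_n}$ is a function of $\Xti[n]$ alone, the tower property then yielding the same expression for $\cexp{\fb_{n+1}(\Xti[n+1])}{\filttilde{n}}$. The update splits into two conditionally independent stages: first $\T_{n+1}$ is drawn from $\transt{n+1}(\T_n,\cdot)$; then, given $\T_n=u$, $\T_{n+1}=v$ and the current cloud, the new particles are sampled i.i.d.\ from $\Phi^X_{n+1}((u,v),\kacXTN{n})=\bgx{n}(\kacXTN{n})\transx{n+1}$, exactly the elementary transition of the sub-particle system recalled above. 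Conditioning successively on $\T_{n+1}$ and then on the new cloud, and using the definitions of $\tilde{\Phi}^X_{n+1}$ in \eqref{def:phitiX} and of $\kacXTN{n+1}$, I obtain
\begin{align*}
\cexp{\fb_{n+1}(\Xti[n+1])}{\Ga_n}
&=\int_{\spt{n+1}}\transt{n+1}(\T_n,\rmd v)\,\fb_{n+1}\bigl(v,\tilde{\Phi}^X_{n+1}((\T_n,v),\kacXTN{n})\bigr)\\
&=\transti{n+1}(\fb_{n+1})(\Xti[n])\eqsp.
\end{align*}

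The point that needs the most care --- and the genuine content of the statement --- is that this right-hand side depends on the past only through the pair $\Xti[n]=(\T_n,\kacXTN{n})$, so that we really have a Markov transition and not merely a one-step formula. This rests on two structural facts. First, $\Theta_n$ is itself a Markov chain, so $\T_{n+1}$ sees the history only through $\T_n$. Second, and more delicately, the selection--mutation law $\bgx{n}(\kacXTN{n})\transx{n+1}$ of the sub-cloud is a function of the empirical measure $\kacXTN{n}$ alone (together with the labels $\t_n,\t_{n+1}$): by \eqref{def:bgx} the resampling weights $G_{\t_n,n}(\xi^k_n)/\sum_{\ell}G_{\t_n,n}(\xi^\ell_n)$ reconstitute exactly $\bgx{n}(\kacXTN{n})$, so the individual particle positions and the parameter prehistory $\T_{0:n-1}$ both drop out. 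Establishing this sufficiency of the empirical measure --- essentially the exchangeability of the swarm --- is the main obstacle; along the way one must also verify the measurability of $(u,v,\nu)\mapsto\fb_{n+1}(v,\tilde{\Phi}^X_{n+1}((u,v),\nu))$ so that the displayed conditional expectation and the integral defining $\transti{n+1}$ are well posed.
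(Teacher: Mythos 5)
Your proposal is correct and follows essentially the same route as the paper's own proof: both reduce the claim to the one-step conditional expectation formula, substituting the recursion $\kacXTN{n+1}=\tilde{\Phi}^X_{n+1}((\T_{n},\T_{n+1}),\kacXTN{n})$ and then integrating over $\transt{n+1}(\T_n,\rmd v)$. The paper's version is terser --- it conditions directly on the filtration generated by $(\Xti[p])_{0\le p\le n}$ and omits the initialization check as well as the explicit justification that the conditional law of the new cloud factors through the empirical measure --- so your added care with the richer $\sigma$-field, the tower property, and the sufficiency of $\kacXTN{n}$ only makes the same argument more rigorous.
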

 \begin{proof}
Let $\filttilde{n}$ stands for the $\sigma$-algebra generated by the random variables $\Xti[p]$, $0 \leq p \leq n$. For all $\fb_n \in \bmf(\spb{n})$:
  \begin{align*}
   \cexp[\kacti{0}]{\fb_n(\Xti[n])&}{\filttilde{n-1}}\\
   &= \cexp[\kacti{0}]{\fb_n(\Theta_n, \eta^{X,\Nin}_{\Theta_{0:n},n})}{\filttilde{n-1}} \\
   &= \cexp[\tilde{\eta}_0]{\fb_n(\Theta_n,\tilde{\Phi}^X_n((\Theta_{n-1},\Theta_n),\eta^{X,\Nin}_{\Theta_{0:n-1},n-1})}{\filttilde{n-1}} \quad \text{by \eqref{def:phitiX}} \eqsp.
  \end{align*}
 Recalling that $\Xti[n-1]=(\Theta_{n-1},\eta^{X,\Nin}_{\Theta_{0:n-1},n-1})$, one can conclude that  
  \begin{align*}
\cexp[\kacti{0}]{\fb_n(\Xti[n])&}{\filttilde{n-1}} \\
&= \cexp[\kacti{0}]{\fb_n(\Xti[n])}{\Xti[n-1]}\\
&= \int_{\spt{n}} \fb_n(\t_n,\tilde{\Phi}^X_n((\Theta_{n-1},\theta_n),\eta^{X,\Nin}_{\Theta_{0:n-1},n-1}) \transt{n}(\Theta_{n-1},\rmd \t_n) \eqsp.
\end{align*}
 \end{proof}
To the Markov chain $\Xti[n]$, one may associate the Feynman-Kac distribution defined for every $\fb_n \in \bmf(\spb{n})$ by 
\begin{equation} \label{def:kacti}
\kacti{n}(\fb_n) \eqdef \unkacti{n}(\fb_n) / \unkacti{n}(\1) \eqsp,
\end{equation}
where $\unkacti{n}$ is defined such that 
$$\unkacti{n}(\fb_n) \eqdef \esp[\kacti{0}]{\fb_n(\Xti_n) \prod_{p=0}^{n-1} \overline{G}_p (\Xti_p)} \eqsp,$$
with $\overline{G}_p$ defined in \eqref{potential on distribution}.\\
In a similar way to $\kacb{n}$, the measure  $\kacti{n}$ satisfies a recursive equation $\kacti{n} = \bgb{n-1}(\kacti{n-1}) \transti{n},$
with $\bgb{n-1}$ the application defined in \autoref{prop:non linear equation distribution}.
This non linear equation can be rewritten as
\begin{equation} \label{eq:rec:kacti}
\kacti{n} = \tilde{\Phi}_n(\kacti{n-1}) \eqsp,
\end{equation}
where the mapping $\tilde{\Phi}_n$ is defined as follows : 
\begin{align}\label{def:phiti}
 \begin{array}{r c l}
  \tilde{\Phi}_n : \psp(\spb{n-1}) & \rightarrow & \psp(\spb{n})\\
  \eta & \mapsto & \bgb{n-1}(\eta) \transti{n} \eqsp.
 \end{array}
\end{align}
As in \autoref{sec:IKF}, when this equation cannot be solved analytically one may use a particle model to approximate the probability measure $\kacti{n}$. In this case, the particles $ \{\Xti^i_n \eqdef (\t^i_n,\kacxN[i]{n})$, $i \in \intvect{1}{\Nis} \}$, would be testing points on the state space $\spb{n}$, for $(\Nis, \Nin) \in (\NN^*)^2$.
These particles explore the state space $\spb{n}$ and their dynamics empirically sample the law $\kacti{n}$ when $\Nis$ gets large.
An interacting particle system associated with the couple $(\overline{G}_n,\transti{n})$ and the initial distribution $\kacti{0}$, is a sequence of non-homogeneous Markov chain, $\Xti[n]^{[\Nis]}$, taking value in the product space $\spb{n}^{\Nis}$, defined by
$$\Xti[n]^{[\Nis]} \eqdef (\Xti[n]^{i})_{i=1}^{\Nis} = (\Xti[n]^{1},\ldots,\Xti[n]^{\Nis}) \in \spb{n}^{\Nis}\eqsp.$$
The initial state of the Markov chain $\Xti[0]^{[\Nis]}$ consists in $\Nis$ independent random variables with common distribution $\kacti{0}$.
Denote by $\kacti{n}^{\Nis}$ the empirical measure at time $n$, which is defined by 
\begin{equation}\label{def:kactiN}
\kacti{n}^{\Nis}\eqdef \frac{1}{\Nis}\sum\limits_{i=1}^{\Nis} \delta_{\Xti[n]^i}.
\end{equation}
The elementary probability transition, is given for any $\xb[n+1]^{[\Nis]} \in \spb{n+1}^{\Nis}$ by 
$$ \prob^{\Nis}_{\kacti{0}}(\Xti[n+1]^{[\Nis]} \in \rmd \xb[n+1]^{[\Nis]} \, | \, \Xti[n]^{[\Nis]} ) 
= \prod_{i=1}^{\Nis} \bgb{n}(\kacti{n}^{\Nis}) \transti{n+1} (\Xti[n]^i, \rmd \xb[n+1]^i) \eqsp. $$
The particle evolution is summarized in \autoref{modified particle} where by definitions \eqref{potential on distribution} and \eqref{def:kacxN},
\begin{align*}
 \overline{G}_{n}(\Xti[n]^i) = \frac{1}{\Nin}\sum_{j=1}^{\Nin}G_n(\t^i_{n},\xi^{i,j}_{n})
 =  \frac{1}{\Nin}\sum_{j=1}^{\Nin}G_{\t^i_{n},n}(\xi^{i,j}_{n}) \eqsp.
\end{align*}
The ensuing algorithm is described in Algorithm \ref{algorithme island}. \\
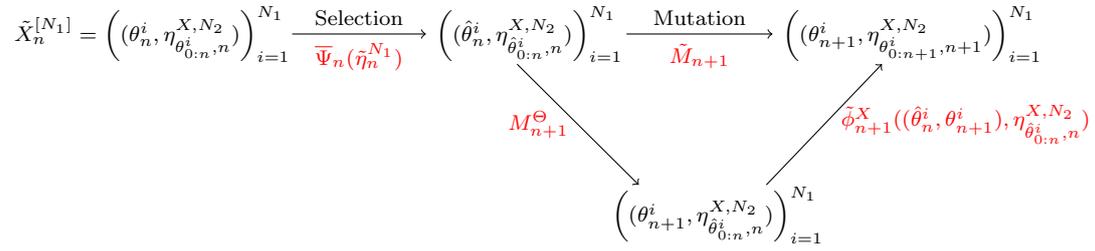
\begin{figure}[!h]
   \begin{tikzpicture}[scale=0.8]
 \draw (0,0) node {\footnotesize{$\Xti[n]^{[\Nis]} = \left((\t^{i}_{n},\kacxN[i]{n})\right)_{i=1}^{\Nis}$}};
 \draw (6.2,0) node {\footnotesize{$\left((\hat{\t}^{i}_{n},\hatkacxN[i]{n})\right)_{i=1}^{\Nis}$}};
 \draw (12.5,0) node {\footnotesize{$\left((\t^{i}_{n+1},\kacxN[i]{n+1})\right)_{i=1}^{\Nis}$}};
 \draw [->] (2.3,0) -- (4.5,0) node [midway,above] {\footnotesize{Selection}} node[midway, below] {\footnotesize{\color{red}$\bgb{n}(\kacti{n}^{\Nis})$}};
 \draw [->] (7.8,0) -- (10.2,0) node [midway, above] {\footnotesize{Mutation}}  node[midway, below] {\footnotesize{\color{red}$\transti{n+1}$}};
 \draw (9.3,-3) node {\footnotesize{$\left((\t^{i}_{n+1},\hatkacxN[i]{n})\right)_{i=1}^{\Nis}$}};
 \draw [->] (6,-0.5) -- (8,-2.5) node [midway, left] {\footnotesize{{\color{red}$\transt{n+1}$}}};
 \draw [->] (10.1,-2.5) -- (12,-0.5) node [midway, right] {\footnotesize{ {\color{red}$\tilde{\phi}^X_{n+1}((\hat{\t}^{i}_{n},\t^{i}_{n+1}),\hatkacxN[i]{n}) $}}} ;
 \end{tikzpicture}
\caption{Evolution scheme of the labeled island particle model.}
\label{modified particle}
\end{figure}
~~\\
\begin{algorithm}[H] \label{algorithme island}
\KwData{$\tilde{\eta}_0$, $(\transti{p})_{p=0}^n$ and $(\bgb{p})_{p=0}^n$}
\KwResult{Particle approximation of $\tilde{\eta}_n$}
  \tcc{Initialization} % $p=0$\;
  \For{$i \gets 1$ \KwTo $N_1$}{
  Sample $\tilde{X}^i_0=(\t_0^i,\kacxN[i]{0})\sim\kacti{0}$, that is \\
  \begin{center}
  $\t^i_0\stackrel{i.i.d}{\sim}\eta^{\Theta}_0$, $\xi^{i,j}_0\stackrel{i.i.d}{\sim}\kacx[i]{0}$ and $\kacxN[i]{0}=\frac{1}{\Nin}\sum\limits_{j=1}^{\Nin}\delta_{\xi^{i,j}_0}$.\\
  \end{center}
  }
  \For{$p \gets 0$ \KwTo $n-1$}{
  \tcc{Selection of islands} 
  Sample $I_p=(I_p^i)_{i=1}^{N_1}$ according to a multinomial distribution with probability proportional to $\left(\frac{1}{N_2}\sum\limits_{j=1}^{N_2}G_p(\theta^i_p,\xi^{i,j}_p)\right)_{i=1}^{N_1} $\; 
  \For{$i \gets 1$ \KwTo $N_1$}{
  \tcc{Selection of particles inside each island} 
  Sample $J^i_p=(J^{i,j}_p)_{j=1}^{N_2}$ according to a multinomial distribution with probability proportional to $\left(G_p(\theta_p^{I^i_p},\xi^{I^i_p,j}_p)\right)_{j=1}^{N_2} $ \;
  \tcc{Mutation of each island}
  Sample independently $\theta^i_{p+1}$ according to $M^{\Theta}_{p+1}(\theta^{I^i_p}_p,.)$ \;
  \For{$j \gets 1$ \KwTo $N_2$}{
  \tcc{Mutation of particles}
  Sample $\xi^{i,j}_{p+1}$ according to $\transx[i]{p+1} (\xi^{I^i_p,J^{i,j}_p}_p,.)$ \;
  }
  } 
  $p\longleftarrow p+1$ 
  }  
  \caption{Labeled island particle algorithm}
  \end{algorithm}
For every $n\geq 0$,  $\kacti{n}^{\Nis}$ is an estimator of $\kacti{n}$, obtained through the {\lip} model, i.e. for every $\fb_n \in\bmf(\spb{n})$,
$$\kacti{n}^{\Nis} (\fb_n) = \frac{1}{\Nis} \sum_{i=1}^{\Nis} \fb_n (\t^i_n,\kacxN[i]{n})$$
converges to $\kacti{n} (\fb_n)$ when $\Nis \rightarrow +\infty$.

%%%%%%%%%%%%%%%%%%%%%%%%%%%%%%%%%%%%%%%%%%%%%%%%%%%%%%%%%%%%%%%%%%%%%%%%%%%%

\section{$\Lp$ bounds } \label{sec:lpbounds}
We are interested in this section in the $\Lp$ bounds of the difference between the estimator $\kacti{n}^{\Nis}$ and the measure $\kacb{n}$. To get these bounds we will use several notations. We define them before going further. 
\subsection{Notations}
Let $(\stsp, \stfd)$ be a measurable space.
For a real-valued measurable function $h \in \bmf(\stsp)$, we denote the oscillator norm $\osc{h} \eqdef \sup_{(x, x') \in \stsp^2} |h(x) - h(x')|$, and $\OscOne(\stsp)$ the convex set of $\stfd$-measurable functions with oscillations less than one. 
The sup norm of $h$ is noted $\supn[txt]{h} \eqdef \sup_{x \in \stsp} |h(x)|$ and the $\Lp$-norm $\|.\|_{p}$.
$\bmfone(\stsp) \subset \bmf(\stsp) $ refers to the set of functions whose sup norm is less than one.
For two probability measures $(\mu,\eta) \in \mathcal{P}(\stsp)^2$, % the total variation distance $\|.\|_{\textsc{tv}}$ between them is given by 
%$$\|\mu-\eta\|_{\TV} \eqdef \sup_{A \in \stfd}|\mu(A)-\eta(A)| \eqsp,$$
%and
the Zolotarev semi-norm $\|.\|_{\Ffr}$ attached to $\Ffr$ a countable collection of bounded measurable functions in $\bmfone(\stsp)$  is defined by 
%a collection of functions $f \in \bmf(\stsp)$ such that $\|f\|_{\infty}\leq 1$ 
$$\|\mu - \eta\|_{\Ffr} \eqdef \sup\limits_{f \in \Ffr} |\mu(f)-\eta(f)| \eqsp.$$ 
To measure the size of a given class $\Ffr$, one considers the covering numbers \\
$\mathcal{N}(\varepsilon,\Ffr,\Lp(\mu))$
defined as the minimal number of $\Lp(\mu)$-balls of radius $\varepsilon>0$ needed to cover $\Ffr$. Let $\mathcal{N}(\varepsilon,\Ffr)$ and $I(\Ffr)$ denote respectively the uniform covering numbers and entropy integral given by 
\begin{equation}\label{def:covnum}
 \mathcal{N}(\varepsilon,\Ffr)\eqdef
 \sup_{\mu\in\mathcal{P}(\stsp)} \mathcal{N}(\varepsilon,\Ffr,\mathbb{L}^2(\mu))
 \end{equation}
\begin{equation}\label{def:entropy}
 I(\Ffr) \eqdef \int_{0}^1\sqrt{\log(1+\mathcal{N}(\varepsilon,\Ffr))} \rmd\varepsilon \eqsp.
\end{equation}
Let $\wedge$ denote the minimum operator and $\vee$ denote the maximum operator. 
For a kernel $M$ defined on $\stsp$, the Dobrushin coefficient of $M$ is 
$$
 \beta(M) \eqdef \sup_{f \in \OscOne(\stsp)}{\osc{M(f)}} \eqsp.
$$ 
%$$\beta(M) \eqdef \sup_{(x,x') \in \stsp^2} \|M (x,.) - M(x',.)\|_{\TV} \eqsp.$$
Let $(d(n))_{n\geq 0}$ be a sequence defined for every $m \geq 0$ by
\begin{equation} \label{def:d}
 \left\lbrace\begin{array}{l}
  d(2m)^{2m} \eqdef (2m)_m 2^{-m} \nonumber\\
  d(2m+1)^{2m+1} \eqdef \frac{(2m+1)_{m+1}}{\sqrt{m+1/2}} 2^{-m+1/2} \eqsp,\\
 \end{array}\right.
\end{equation}
 where for any positive integers $(p,q) \in (\nsetpos)^2$, $(q+p)_p \eqdef (q+p)!/q! \eqsp.$\\
For $n \in \NN$, introduce the Feynman-Kac semi-groups $\overline{Q}_{n}$ (resp. $Q^X_{\t_{n-1:n},n}$) such that for all 
$(\xb_n, \xb[n+1]) \in \spb{n} \times \spb{n+1}$ 
(resp. $(x_n, x_{n+1}) \in \spx{n} \times \spx{n+1}$),
$$ \overline{Q}_{n+1}(\xb[n],\rmd \xb[n+1]) \eqdef \overline{G}_{n}(\xb[n]) \transb{n+1}(\xb[n],\rmd \xb[n+1]) \eqsp, $$
\begin{center}
$\left( \text{resp.} \, Q^X_{\t_{n:n+1},n+1}(x_{n},\rmd x_{n+1}) \eqdef G_{\t_{n},n}(x_{n}) \transx{n+1}(x_{n},\rmd x_{n+1})\right)$.
\end{center}
For every $(p,n) \in (\NN)^2$ such that $p<n$, set
$$ \overline{Q}_{p,n} \eqdef \overline{Q}_{p+1} \ldots \overline{Q}_n \eqsp, \quad \text{and} \quad \overline{P}_{p,n} \eqdef\overline{Q}_{p,n} / \overline{Q}_{p,n}(\1) \eqsp, $$
\begin{center}
$\left(\text{resp.} \, Q^X_{\t_{p:n},p,n} \eqdef Q^X_{\t_{p:p+1},p+1}\ldots Q^X_{\t_{n-1:n},n} \, \text{and} \, P^X_{\t_{p:n},p,n}(f_n) \eqdef  Q^X_{\theta_{p:n},p,n}(f_n) / Q^X_{\t_{p:n},p,n}(\1) \right)\eqsp,$\end{center}
and set the normalizing constant
$$\overline{G}_{p,n} \eqdef \overline{Q}_{p,n}(\1) \eqsp, \quad  (\text{resp.}  G_{\t_{p:n},p,n} \eqdef Q^X_{\t_{p:n},p,n}(\1)) \eqsp.$$
Finally, set
$$ \overline{g}_{p,n} \eqdef \sup\limits_{(\xb[p],\overline{y}_p) \in (\spb{p})^2} \dfrac{\overline{G}_{p,n}(\overline{x}_p)}{\overline{G}_{p,n}(\overline{y}_p)} \eqsp, \quad
\left(\text{resp.}~ g_{\t_{p:n},p,n} \eqdef \sup\limits_{(x_p,y_p) \in (\spx{p})^2} \dfrac{G_{\t_{p:n},p,n}(x_p)}{G_{\t_{p:n},p,n}(y_p)} \right)\eqsp.$$

In order to study the difference between $\kacti{n}^{\Nis}$ and $\kacb{n}$, we use several results taken from \cite{delmoral:2004}. Then, we will always assume that for all $n \in \NN$, the potential functions $G_{\t_n,n}$ defined in \eqref{eq:qpot} satisfy the following condition ($G_\t$): \\
there exists a sequence of strictly positive number $\epsilon_n(G_\t)\in (0,1]$ such that for any $(x_n,y_n) \in (\spx{n})^2$ :
\begin{equation}\label{condition:Gt}
G_{\t_n,n}(x_n) \geq \epsilon_n (G_\t)G_{\t_n,n}(y_n)>0 \tag{$G_\t$}
\end{equation}
%\xtel[inline]{On dirait que $\epsilon_n(G_\t)$ est une fonction de $(G_\t)$ qui est lui mÃªme une rÃ©fÃ©rence d'Ã©quation... je pense qu'il y a $(G_\t)$ pour le diffÃ©rencier du $\epsilon_n$ de la condition $\overline{G}$. Peut Ãªtre utiliser Ã  la place $\epsilon_n^\t$ et $\overline{\epsilon}_n$? qu'est-ce que tu en penses? Modifier si on s'en sert aprÃšs...} 
Therefore, for all $n \in \NN$, the potential functions $\overline{G}_n$ satisfy the following condition ($\overline{G}$):\\
there exists a sequence of strictly positive number $\epsilon_n(\overline{G})\in (0,1]$ such that for any $(\xb[n],\overline{y}_n) \in (\spb{n})^2$ :
\begin{equation}\label{condition:Gb}
\overline{G}_n(\xb[n]) \geq \epsilon_n (\overline{G}) \overline{G}_n(\overline{y}_n)>0 \tag{$\overline{G}$}
\end{equation}
Moreover we always assume that the collection of distributions $\left(\transb{n+1}(\xb[n],.)\right)_{\xb[n]\in \spb{n}}$ are absolutely continuous with one another. That is for every $n\geq 0$ and $(\xb[n], \overline{y}_n) \in (\spb{n})^2$, one has  
$$\transb{n+1}(\xb[n],.)\ll \transb{n+1}(\overline{y}_n,.) \eqsp.$$
In addition, we assume that the collection of distributions $\left(\transx{n+1} (x_{n},.)\right)_{x_n\in\spx{n}}$ are absolutely continuous with one another. That is for every $n\geq 0$, $\theta_{n+1}\in \spt{n+1}$ and $(x_n, y_n) \in (\spx{n})^2$, one has : 
$$\transx{n+1}(x_n,.)\ll \transx{n+1}(y_n,.) \eqsp.$$
Note that for time homogeneous models on finite spaces condition those conditions are met as soon as the Markov chain is aperiodic and irreducible. Some examples are illustrated by typical examples in \cite{delmoral:2004}.

\subsection{$\Lp$ bound}
Consider that for all $n \in \NN$, the product space $\spb{n} = \spt{n} \times \psp(\spx{n})$ is equipped with the norm $\|\cdot\|_{\spb{n}}$ such that for all $(u,v) \in (\spt{n})^2$ and $(\nu, \eta) \in (\psp(\spx{n}))^2$,
$$
\| (u,\eta)-(v,\nu) \|_{\spb{n}} = |u-v| + \|\eta - \nu\|_{\Ffr_{n}} \eqsp.
$$
where $\Ffr_{n}$ is a countable collection of functions in $\bmfone(\spx{n})$.
\begin{thrm}\label{theo:Lp bound}
For any $p \in \nsetpos$, $n \in \NN$, let $\fb_n \in \OscOne(\spb{n})$ be a $k_n$-Lipschitz function.
Assume that for any $\t_n \in \spt{n}$, the kernel transition $\transx{n}$ can be written as $\transx{n} (x_{n-1},\rmd x_n) = m^X_{\t_n,n}(x_{n-1},x_n) p_{\t_n,n}(\rmd x_n)$ for some measurable function $m^X_{\t_n,n}$ on $\spx{n-1} \times \spx{n}$ and some probability measure $p_{\t_n,n} \in \psp(\spx{n})$. 
Furthermore, assume that there exists a collection of mappings $\alpha_{\t_n,n}$ on $\spx{n}$ such that 
\begin{center}
$\sup_{x_{n-1} \in \spx{n-1}} | \log m^X_{\t_n,n}(x_{n-1},x_{n})| \leq \alpha_{\t_n,n}(x_{n})$ 
\end{center}
with $p_{\t_n,n}(e^{3\alpha_{\t_n,n}}) < \infty$. \\
Then, the $\Lp$ error is bounded by
\begin{align}
 \|\kacti{n}^{N_1}(\fb_n) - \kacb{n}(\fb_n)\|_p &\leq k_n \frac{a(p)}{\sqrt{\Nin}} (I(\Ffr_{n})+b(n)) + 2 \frac{d(p)}{\sqrt{\Nis}} \sum_{q=0}^n \overline{g}_{q,n}\beta(\overline{P}_{q,n}) \eqsp,
\end{align}
where the sequence $d(n)$ is defined in \eqref{def:d}, $I(\Ffr_{n})$ is defined in \eqref{def:entropy}, $(b(n))_{n\geq 0}$ is defined by 
\begin{center} \label{def:b}
$b(0)=0$ and $b(n+1)\leq g_{\theta_n,n} p_{\theta_{n+1},n+1}(e^{3\alpha_{\theta_{n+1},n+1}})\sum\limits_{q=0}^n g_{\theta_{q:n},q,n}\beta(P_{\theta_{q:n},q,n}) \eqsp,$
\end{center}
and $a(n)$ is a sequence such that for all $n\in \nsetpos$, $a(n)\leq c\left[n/2\right]!$ with $c$ a universal constant.
\end{thrm}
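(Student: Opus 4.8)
The plan is to split the total error along the two approximation levels that the labeled island model introduces and to control each level by an $\Lp$ bound of the appropriate Del Moral type. Introducing the intermediate flow $\kacti{n}$ (the idealized outer flow in which every island already carries an $\Nin$-particle inner measure, i.e. the law driven by $\transti{n}$ rather than by $\transb{n}$), I would write
\begin{equation*}
\kacti{n}^{\Nis}(\fb_n) - \kacb{n}(\fb_n) = \bigl[\kacti{n}^{\Nis}(\fb_n) - \kacti{n}(\fb_n)\bigr] + \bigl[\kacti{n}(\fb_n) - \kacb{n}(\fb_n)\bigr]
\end{equation*}
and apply Minkowski's inequality. The first bracket is the \emph{outer} sampling error (finite number $\Nis$ of islands, inner resolution already baked into $\transti{n}$) and must furnish the $\Nis^{-1/2}$ term; the second bracket is the \emph{inner} approximation error (the gap between the $\Nin$-particle update $\tilde{\Phi}^X$ and the exact update $\Phi^X$) and must furnish the $\Nin^{-1/2}$ term. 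Because the $\Theta$-components of the chains $\Xti[n]$ and $\Xb[n]$ both evolve through the common parameter kernel $\transt{n}$, I expect the two brackets to decouple at the level of the bound.

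For the outer bracket, $\kacti{n}^{\Nis}$ is precisely the $\Nis$-particle Feynman-Kac approximation of $\kacti{n}$ on $\spb{n}$ associated with the potential $\overline{G}_n$ and the kernel $\transti{n}$. I would invoke the classical $\Lp$ bound for Feynman-Kac particle models \cite{delmoral:2004}: for $\fb_n \in \OscOne(\spb{n})$ this gives a bound $2\,d(p)\,\Nis^{-1/2}\sum_{q=0}^n \tilde{g}_{q,n}\beta(\tilde{P}_{q,n})$ in terms of the semigroup quantities of the $\transti{n}$-flow. It then remains to dominate these by the idealized quantities $\overline{g}_{q,n}$ and $\beta(\overline{P}_{q,n})$ of the exact $\transb{n}$-flow. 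This holds because both the normalizing-potential oscillations and both Dobrushin coefficients are suprema over the whole space $\spb{q}$ (the $\Nin$-particle measures being a subset of $\psp(\spx{q})$), while the potential $\overline{G}$ and the parameter mutation $\transt{n}$ are identical in the two flows, so that condition \eqref{condition:Gb} supplies the same mixing. This yields the second term.

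For the inner bracket I would exploit that $\fb_n$ is $k_n$-Lipschitz for $\|\cdot\|_{\spb{n}}$: since only the measure component differs between the two flows, one has $|\fb_n(v,\tilde{\Phi}^X)-\fb_n(v,\Phi^X)|\leq k_n\|\tilde{\Phi}^X-\Phi^X\|_{\Ffr_n}$, which reduces the inner error to controlling, in $\Lp$ and uniformly over the class $\Ffr_n$, the deviation $\|\kacXTN{n}-\kacX{n}\|_{\Ffr_n}$ of the quenched $\Nin$-particle measure from the exact quenched flow along the realized trajectory $\Theta_{0:n}$ (boundedness of $G$ from \eqref{condition:Gt} absorbing the Feynman-Kac reweighting into the constants). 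Conditioning on $\Theta_{0:n}$, this is a uniform Glivenko--Cantelli / maximal-inequality estimate for a Feynman-Kac particle system, in which the entropy integral $I(\Ffr_{n})$ (see \eqref{def:entropy}) governs the fluctuation of the empirical process and the term $b(n)$ collects the propagation of that fluctuation through the selection--mutation recursion. The recursive inequality for $b(n)$ would be established step by step: the mutation by $\transx{n+1}$ transforms the test class into images whose $\Lp$-oscillations and covering numbers $\mathcal{N}(\varepsilon,\Ffr_{n})$ are controlled by writing $\transx{n+1}(x_n,\rmd x_{n+1})=m^X_{\t_{n+1},n+1}(x_n,x_{n+1})\,p_{\t_{n+1},n+1}(\rmd x_{n+1})$ and bounding the density through $\alpha_{\t_{n+1},n+1}$, the hypothesis $p_{\t_{n+1},n+1}(e^{3\alpha_{\t_{n+1},n+1}})<\infty$ providing the required exponential moment, while the quenched semigroup factor $g_{\theta_{q:n},q,n}\beta(P_{\theta_{q:n},q,n})$ measures how the selection step carries the error from time $q$ to time $n$. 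Integrating over $\Theta_{0:n}$ and recombining with the Lipschitz constant produces the first term $k_n\,a(p)\,\Nin^{-1/2}(I(\Ffr_{n})+b(n))$, where $a(n)\leq c[n/2]!$.

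The main obstacle is the inner estimate, specifically the uniform-over-$\Ffr_n$ control of the quenched particle fluctuations and its propagation through the recursion. Unlike the outer bound, which follows from an off-the-shelf pointwise $\Lp$ inequality, here one needs empirical-process machinery (chaining against the covering numbers $\mathcal{N}(\varepsilon,\Ffr_{n})$) together with a careful bound on how the function class is distorted by the Feynman-Kac semigroup; this is exactly where the factorization of $\transx{n+1}$, the dominating function $\alpha_{\t_{n+1},n+1}$, and the cubic exponential moment $p_{\t_{n+1},n+1}(e^{3\alpha_{\t_{n+1},n+1}})$ enter, the cube arising from simultaneously handling the selection potential ratio and the mutation density in a H\"older argument. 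A secondary technical point, which I would want to verify rigorously, is that the two levels genuinely decouple, i.e. that the Feynman-Kac reweighting by the random particle potentials $\overline{G}_p(\Xti[p])$ creates no cross-term between the $\Nis^{-1/2}$ and $\Nin^{-1/2}$ scales beyond what the constants already absorb.
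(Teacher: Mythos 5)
Your two-term architecture and the Del Moral ingredients (the pointwise $\Lp$ bound of his Theorem~7.4.4 for the $\Nis^{-1/2}$ term, the entropy-based Corollary~7.4.4 for the $\Nin^{-1/2}$ term, with $I(\Ffr_{n})$, $b(n)$ and the exponential-moment hypothesis entering exactly where you say) match the paper's, but your decomposition does not: you insert the exact modified flow $\kacti{n}$ as the intermediate point, whereas the paper inserts the empirical measure $\kacb{n}^{\Nis}$ of the \emph{idealized} island system sharing the same parameter particles $\t^i_n$. Concretely the paper writes
$$\|\kacti{n}^{\Nis}(\fb_n)-\kacb{n}(\fb_n)\|_p\leq\|\kacti{n}^{\Nis}(\fb_n)-\kacb{n}^{\Nis}(\fb_n)\|_p+\|\kacb{n}^{\Nis}(\fb_n)-\kacb{n}(\fb_n)\|_p\eqsp,$$
bounds the second term by Theorem~7.4.4 applied to the \emph{exact} flow (which produces the constants $\overline{g}_{q,n}\beta(\overline{P}_{q,n})$ precisely as stated), and reduces the first term to $\Nis^{-1}\sum_i k_n\|\kacxN[i]{n}-\kacx[i]{n}\|_{\Ffr_{n}}$ by the Lipschitz hypothesis, then to a single worst-case island via an argmax $\theta^*$, to which Corollary~7.4.4 applies. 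Both brackets are then off-the-shelf.

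Your route creates two obligations that you assert rather than discharge, and neither is free. First, applying Theorem~7.4.4 to the flow driven by $(\overline{G}_n,\transti{n})$ yields constants $\tilde g_{q,n}\beta(\tilde P_{q,n})$ built from the semigroup of $\transti{n}$, and your argument that these are dominated by $\overline{g}_{q,n}\beta(\overline{P}_{q,n})$ --- ``both are suprema over the whole space'' --- does not work: the suprema range over the same space but are taken of \emph{different functions}, since $\tilde Q_{q,n}(\1)$ integrates the potentials along measure-component trajectories generated by $\tilde\Phi^X$ rather than $\Phi^X$, so no inequality between the two families of constants is evident. Second, your inner bracket $\kacti{n}(\fb_n)-\kacb{n}(\fb_n)$ compares two exact Feynman--Kac flows with different kernels; the discrepancy enters not only through $\fb_n$ (where Lipschitzness helps) but also through the reweighting products $\prod_p\overline{G}_p(\cdot)$, which are evaluated at different measure components in the two flows. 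Controlling that requires a stability analysis of the normalized flow under kernel perturbation and would pollute the first term with additional $g$-type factors absent from the stated bound. The paper's choice of intermediate point sidesteps both problems because the Lipschitz reduction is performed island by island at the empirical level, where only the measure coordinate differs; if you want to keep your decomposition, these two steps are where the real work lies.
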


\begin{proof}
Let $\fb_n \in \OscOne(\spb{n})$ be a $k_n$-Lipschitz function, and apply triangular inequality:
 $$\| \kacti{n}^{\Nis}(\fb_n) - \kacb{n}(\fb_n) \|_p 
 \leq \|\kacti{n}^{\Nis}(\fb_n) - \kacb{n}^{\Nis}(\fb_n)\|_p 
 + \| \kacb{n}^{\Nis}(\fb_n) - \kacb{n}(\fb_n) \|_p \eqsp,$$
 where $\kacb{n}$ is defined in \eqref{Feynman-Kac flow in distribution}.
Then using Theorem 7.4.4 from \cite{delmoral:2004},
one can bound the second term 
$$\|\kacb{n}^{\Nis}(\fb_n) - \kacb{n}(\fb_n)\|_p \leq 2 \frac{d(p)}{\sqrt{\Nis}} \sum_{q=0}^n \overline{g}_{q,n}\beta(\overline{P}_{q,n}) \eqsp.$$
Therefore, in order to bound the first term, use the definitions of $\kacti{n}^{\Nis}$ and $\kacb{n}^{\Nis}$ in \eqref{def:kactiN} and \eqref{def:kacb} respectively :  \begin{align*}
\|\kacti{n}^{\Nis}(\fb_n) - \kacb{n}^{\Nis}(\fb_n)\|_p 
  &= \esp[\kacb{0}]{ |\kacti{n}^{\Nis}(\fb_n) - \kacb{n}^{\Nis} (\fb_n) |^p }^{1/p}\\
  &= \esp[\kacb{0}]{ \left| \frac{1}{\Nis} \sum_{i=1}^{\Nis} \left\{\fb_n(\t^i_n,\kacxN[i]{n}) - \fb_n(\t^i_n,\kacx[i]{n}) \right\} \right|^p }^{1/p}\\
  &\leq \esp[\kacb{0}]{ \left( \frac{1}{\Nis} \sum_{i=1}^{\Nis} | \fb_n(\t^i_n,\kacxN[i]{n}) - \fb_n (\t^i_n, \kacx[i]{n}) | \right)^p }^{1/p}
 \end{align*}
As $\fb_n$ is $k_n$-Lipschitz, it follows
\begin{align*}
 \left|\fb_n(\t^i_n,\kacxN[i]{n}) - \fb_n(\t^i_n, \kacx[i]{n}) \right| 
 & \leq k_n \left\| (\t^i_n,\kacxN[i]{n}) - (\t^i_n, \kacx[i]{n}) \right\|_{\spb{n}} \\
 &\leq k_n \left\| \kacxN[i]{n} - \kacx[i]{n} \right\|_{\Ffr_{n}} \eqsp.
\end{align*}
where $\Ffr_{n}$ is a countable collection of functions in $\bmfone(\spx{n})$
Therefore one gets 
\begin{align*}
 \|\kacti{n}^{\Nis}(\fb_n) - \kacb{n}^{\Nis}(\fb_n) \|_p 
 \leq \esp[\kacb{0}]{ \left(\frac{1}{\Nis} \sum_{i=1}^{\Nis} k_n \left\| \kacxN[i]{n} - \kacx[i]{n} \right\|_{\Ffr_{n}} \right)^p }^{1/p} \eqsp.
\end{align*}
Denoting by $\theta^*$ the value at which the maximum of the Zolotarev semi-norms for $i \in \intvect{1}{\Nis}$ is reached, yields to 
\begin{align*}
 \|\kacti{n}^{\Nis}(\fb_n) - \kacb{n}^{\Nis}(\fb_n)\|_p 
 &\leq k_n \esp[\kacb{0}]{ \|\eta^{X,\Nin}_{\theta^*_{0:n},n} - \eta^X_{\theta^*_{0:n},n}\|^p_{\Ffr_{n}} }^{1/p} \eqsp.
 \end{align*}
But, according to [\cite{delmoral:2004}, Corollary 7.4.4] (p. 247),
 \begin{align*}
  \esp[\kacb{0}]{ \|\eta^{X,\Nin}_{\theta^*_{0:n},n} - \eta^X_{\theta^*_{0:n},n}\|^p_{\Ffr_{n}} }^{1/p} 
  &\leq \frac{a(p)}{\sqrt{\Nin}}(I(\Ffr_{n})+b(n)) \eqsp,
 \end{align*}
  where $I(\Ffr_{n})$ is the entropy of $\Ffr_{n}$ defined in \eqref{def:entropy}.
Finally one ends up with 
$$\|\kacti{n}^{\Nis}(\fb_n) - \kacb{n}^{\Nis}(\fb_n)\|_p 
\leq k_n \frac{a(p)}{\sqrt{\Nin}} (I(\Ffr_{n})+b(n)) 
+ 2 \frac{d(p)}{\sqrt{\Nis}} \sum_{q=0}^n \overline{g}_{q,n}\beta(\overline{P}_{q,n}) \eqsp.$$
\end{proof}

\subsection{Time uniform bound}
Before stating the uniform estimate we define the following two additional conditions. \\
There exists some integer $\overline{m}\geq 1$ and some numbers $\varepsilon_n(\overline{M})\in\left]0,1\right[$ such that for $n \in \NN$ and $(\xb_n,\overline{y}_n) \in \spb{n}^2 $, one has :
\begin{equation}\label{condition:Mb}
\overline{M}_{n,n+\overline{m}}(\xb_n,.) \eqdef \overline{M}_{n+1} \ldots \overline{M}_{n+\overline{m}}(\overline{x}_n,.) \geq \varepsilon_n(\overline{M})\overline{M}_{n,n+\overline{m}}(\overline{y}_n,.) \tag{$(\overline{M})_{\overline{m}}$} 
\end{equation}
For all $i\in\intvect{1}{\Nis}$, there exists some integer $m_i\geq 1$ and some numbers $\varepsilon_n(M^X_{\theta^i})\in\left]0,1\right[$ such that for $n \in \NN$, $(x_n,y_n) \in (\spx{n})^2$, and $\t^i_{n+1:n+m_i} \in \spt{n+1}\times\ldots\times\spt{n+m}$, one has :
\begin{multline}  \label{condition:Mt}
M^X_{\theta^i_{n+1:n+m_i},n,n+m_i}(x_n,.) \eqdef M^X_{\theta^i_{n+1},n+1}\ldots M^X_{\theta^i_{n+m_i},n+m_i}(x_n,.) \\
\geq \varepsilon_n(M^X_{\theta^i})M^X_{\theta^i_{n+1:n+m_i},n,n+m}(y_n,.) \tag{$(M^X_{\theta^i})_{m_i}$} 
\end{multline}

\begin{thrm}\label{theorem:lp:uniform}
 Suppose that conditions \eqref{condition:Gb}, \eqref{condition:Mb} are met for some integer $\overline{m} \geq 1$ and some pair parameters $(\epsilon_n(\overline{G}),\varepsilon_n(\overline{M}))$ and set $\epsilon(\overline{G}) \eqdef \wedge_{n\geq 0} \epsilon_n(\overline{G})$ and $\varepsilon(\overline{M}) \eqdef \wedge_{n\geq 0}\varepsilon_n(\overline{M})$. \\
 Moreover, assume that for all $i \in \intvect{1}{\Nis}$ conditions \eqref{condition:Gt} and \eqref{condition:Mt} hold true for some sequence of integer $m_i$ and some pair parameters $(\epsilon_n(G_{\theta^i}),\varepsilon_n(M^X_{\theta^i}))$ and set $\epsilon(G_{\theta^i}) \eqdef \wedge_{n\geq 0} \epsilon_n(G_{\theta^i})$ and $\varepsilon(M^X_{\theta^i}) \eqdef \wedge_{n\geq 0}\varepsilon_n(M^X_{\theta^i})$. Set $m\eqdef\vee_{i}m_i$. \\
Further assume that for all $n\geq 0$ and $\theta^i_n\in \spt{n}$ the kernel transition $M^X_{\theta^i_n,n}$ has the form $M^X_{\theta^i_n,n}(x_{n-1},\rmd x_n)=m^X_{\t^i_n,n}(x_{n-1},x_n)p_{\t^i_n,n}(\rmd x_n)$ for some measurable function $m^X_{\theta^i_n,n}$ on $\spx{n-1} \times \spx{n}$ and some probability measure $p_{\theta^i_n,n} \in \psp(\spx{n})$. \\
Also assume that $\sup_{x_{n-1} \in \spx{n-1}}|\log m^X_{\t^i_n,n}(x_{n-1},x_n)|\leq \alpha_{\theta^i_n,n}(x_n)$ with $p_{\t^i_n,n}(e^{3\alpha_{\t^i_n,n}})<\infty$ for some collection of mappings $\alpha_{\t^i_n,n}$ on $E^X_n$, and set :
  $$p_{\t^i}(e^{3\alpha_{\t^i}}) \eqdef \sup\limits_{n\geq 0}p_{\t^i_n,n}(e^{3\alpha_{\t^i_n,n}})<\infty\quad \text{and} \quad p_{\t}(e^{3\alpha_{\t}}) \eqdef \vee_{i}p_{\t^i}(e^{3\alpha_{\t^i}}) \eqsp.$$
Then for any $p \in \nsetpos$, any $k_n$-Lipschitz functions $\fb_n \in \OscOne(\spb{n})$ one has :
$$\sup\limits_{n\geq 0} \, \sup\limits_{\fb_n\in \OscOne(\spb{n})} \|\kacti{n}^{\Nis}(\fb_n)-\kacb{n}(\fb_n)\|_p \leq \frac{2d(p)\overline{m}}{\sqrt{\Nis}\varepsilon(\overline{M})^3\epsilon(\overline{G})^{2\overline{m}-1}}+ \frac{k \,a(p)}{\sqrt{\Nin}}\left(I+\frac{mp_{\t}(e^{3\alpha_{\theta}})}{\varepsilon(M^X_{\theta})^3\epsilon(G_{\theta})^{2m}}\right)$$
with $$k=\sup_n k_n \eqsp, \quad \varepsilon(M^X_{\theta})=\wedge_{i} \varepsilon(M^X_{\theta^i})>0\eqsp, \quad\epsilon(G_{\theta})= \wedge_{i} \epsilon(G_{\theta^i})>0 \eqsp, \quad I \eqdef \sup\limits_{n\geq 0} I(\Ffr_{n})<\infty \eqsp.$$
\end{thrm}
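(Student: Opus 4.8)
The plan is to build directly on the non-uniform estimate of \autoref{theo:Lp bound}, which already gives, for each fixed $n$,
$$\|\kacti{n}^{\Nis}(\fb_n) - \kacb{n}(\fb_n)\|_p \leq k_n \frac{a(p)}{\sqrt{\Nin}}\bigl(I(\Ffr_{n})+b(n)\bigr) + 2\frac{d(p)}{\sqrt{\Nis}}\sum_{q=0}^n \overline{g}_{q,n}\beta(\overline{P}_{q,n}) \eqsp.$$
The whole task reduces to replacing each $n$-dependent factor on the right by a constant independent of $n$, under the extra mixing hypotheses. First I would dispose of the trivial replacements: $k_n \leq k \eqdef \sup_n k_n$ and $I(\Ffr_{n}) \leq I \eqdef \sup_n I(\Ffr_{n}) < \infty$, both assumed finite. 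Everything else is then a matter of controlling the two genuinely time-dependent sums uniformly.

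The substantial work is to control those sums by the classical Dobrushin-contraction estimates for Feynman-Kac semigroups (Del Moral \cite{delmoral:2004}). For the outer-level sum I would invoke the standard bound that, under \eqref{condition:Gb} and \eqref{condition:Mb}, the normalized semigroup $\overline{P}_{q,n}$ contracts geometrically while the normalizing-constant ratios $\overline{g}_{q,n}$ remain controlled, so that
$$\sum_{q=0}^n \overline{g}_{q,n}\beta(\overline{P}_{q,n}) \leq \frac{\overline{m}}{\varepsilon(\overline{M})^3\,\epsilon(\overline{G})^{2\overline{m}-1}}$$
uniformly in $n$, which produces the first term of the stated bound. The key inputs are that $\beta(\overline{P}_{q,n})$ decays geometrically with rate governed by $\varepsilon(\overline{M})$ and $\epsilon(\overline{G})$ over blocks of length $\overline{m}$, and that $\overline{g}_{q,n}$ is bounded by a power of $\epsilon(\overline{G})^{-1}$; summing the resulting geometric series yields the displayed constant.

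Next I would bound $b(n)$ uniformly by applying the same contraction machinery at the inner (quenched) level, using the island-uniform parameters $\epsilon(G_{\theta})=\wedge_i\epsilon(G_{\theta^i})$, $\varepsilon(M^X_{\theta})=\wedge_i\varepsilon(M^X_{\theta^i})$ and $m=\vee_i m_i$, which dominate the quantities attached to the worst realization $\theta^*$ appearing in the proof of \autoref{theo:Lp bound}. From the defining recursion for $b$ I would bound the single-step potential ratio $g_{\theta_n,n}\leq 1/\epsilon(G_{\theta})$ from \eqref{condition:Gt} (since $\transx{n}$ is Markov, the one-step unnormalized semigroup reduces to $G_{\t_n,n}$), the exponential moment $p_{\theta_{n+1},n+1}(e^{3\alpha_{\theta_{n+1},n+1}})\leq p_{\theta}(e^{3\alpha_{\theta}})$ by definition, and the quenched mixing sum
$$\sum_{q=0}^n g_{\theta_{q:n},q,n}\beta(P_{\theta_{q:n},q,n}) \leq \frac{m}{\varepsilon(M^X_{\theta})^3\,\epsilon(G_{\theta})^{2m-1}}$$
via \eqref{condition:Gt} and \eqref{condition:Mt}. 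Since the recursion expresses $b(n+1)$ directly through these three factors (and $b(0)=0$), multiplying them gives $b(n)\leq m\,p_{\theta}(e^{3\alpha_{\theta}})\bigl(\varepsilon(M^X_{\theta})^3\epsilon(G_{\theta})^{2m}\bigr)^{-1}$ for every $n$. Substituting these uniform bounds together with $k$ and $I$ into the estimate of \autoref{theo:Lp bound}, and taking $\sup_n$ and $\sup_{\fb_n}$, produces the claimed time-uniform bound.

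The main obstacle I anticipate is establishing the two geometric-mixing sums with precisely the stated dependence on the block lengths $\overline{m},m$ and on the mixing parameters: this requires the careful Dobrushin-coefficient arithmetic over $\overline{m}$- and $m$-step blocks from \cite{delmoral:2004}, in particular tracking how the powers $\varepsilon(\cdot)^3$ and $\epsilon(\cdot)^{2\overline{m}-1}$ (resp. $\epsilon(\cdot)^{2m-1}$) arise from combining the one-step oscillation control of the potentials with the block-wise minorization of the mutation kernels. Once these two uniform sums are in hand, the remainder is bookkeeping around \autoref{theo:Lp bound}.
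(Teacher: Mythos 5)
Your proposal is correct and follows essentially the same route as the paper: both reduce to the decomposition underlying \autoref{theo:Lp bound} and then invoke the time-uniform Dobrushin-contraction estimates of Del Moral (Theorem 7.4.4 for the outer semigroup $\overline{P}_{q,n}$ and Corollary 7.4.5, equivalently your uniform bound on $b(n)$ via its recursion, for the quenched level), with the same bookkeeping $k=\sup_n k_n$, $I=\sup_n I(\Ffr_n)$ and the worst-case island parameters $m=\vee_i m_i$, $\epsilon(G_\theta)=\wedge_i\epsilon(G_{\theta^i})$, $\varepsilon(M^X_\theta)=\wedge_i\varepsilon(M^X_{\theta^i})$. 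The only cosmetic difference is that the paper re-runs the triangle inequality and cites the uniform statements directly, whereas you bound the $n$-dependent constants of the already-proved non-uniform theorem; the underlying estimates are identical.
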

\begin{proof}
 \begin{multline*}
  \sup\limits_{n\geq 0} \, \sup\limits_{\fb_n\in \OscOne(\spb{n})} \|\kacti{n}^{\Nis}(\fb_n)-\kacb{n}(\fb_n)\|_p \\
  \leq \sup\limits_{n\geq 0} \, \sup\limits_{\fb_n\in \OscOne(\spb{n})} 
  \left(\|\kacti{n}^{\Nis}(\fb_n)-\kacb{n}^{\Nis}(\fb_n)\|_p 
  + \|\kacb{n}^{\Nis}(\fb_n)-\kacb{n}(\fb_n)\|_p \right)\\
  \leq \sup\limits_{n\geq 0} \, \sup\limits_{\fb_n\in \OscOne(\spb{n})}\|\kacti{n}^{\Nis}(\fb_n)-\kacb{n}^{\Nis}(\fb_n)\|_p 
  +\sup\limits_{n\geq 0} \, \sup\limits_{\fb_n\in \OscOne(\spb{n})}\|\kacb{n}^{\Nis}(\fb_n)-\kacb{n}(\fb_n)\|_p
 \end{multline*}
From [\cite{delmoral:2004}, Theorem 7.4.4] (p. 247), one has 
$$\sup\limits_{n\geq 0} \, \sup\limits_{\fb_n\in \OscOne(\spb{n})}\|\kacb{n}^{\Nis}(\fb_n)-\kacb{n}(\fb_n)\|_p\leq \frac{2d(p)\overline{m}}{\sqrt{\Nis}\varepsilon(\overline{M})^3 \epsilon(\overline{G})^{2\overline{m}-1}} \eqsp,$$
since conditions \eqref{condition:Gb} and \eqref{condition:Mb} hold true.
Then it follows that the only term one has to work on is the following.
\begin{align*}
 &\sup\limits_{n\geq 0} \, \sup\limits_{\fb_n\in \OscOne(\spb{n})}\|\kacti{n}^{\Nis}(\fb_n)-\kacb{n}^{\Nis}(\fb_n)\|_p \\
 &=\sup\limits_{n\geq 0} \, \sup\limits_{\fb_n\in \OscOne(\spb{n})}
 \left\|\frac{1}{\Nis}\sum\limits_{i=1}^{\Nis}\left\lbrace\fb_n(\theta^i_n,\kacxN[i]{n})-\fb_n(\theta^i_n,\kacx[i]{n})\right\rbrace \right\|_p \\
 &\leq \frac{1}{\Nis}\sum\limits_{i=1}^{\Nis} \sup\limits_{n\geq 0} \, \sup\limits_{\fb_n\in \OscOne(\spb{n})}
 \left\|\fb_n(\theta^i_n,\kacxN[i]{n})-\fb_n(\theta^i_n,\kacx[i]{n}) \right\|_p
\end{align*}
As the function $\fb_n$ is $k_n$-Lipschitz, for all $i \in \intvect{1}{\Nis}$,
\begin{align*}
 &\sup\limits_{n\geq 0} \, \sup\limits_{\fb_n\in \OscOne(\spb{n})}
 \left\|\fb_n(\t^i_n,\kacxN[i]{n}) - \fb_n(\t^i_n,\kacx[i]{n}) \right\|_p \\
 &=\sup\limits_{n\geq 0} \, \sup\limits_{\fb_n\in \OscOne(\spb{n})} 
 \esp[\kacb{0}]{\left|\fb_n(\t^i_n,\kacxN[i]{n}) - \fb_n(\t^i_n,\kacx[i]{n}) \right|^p }^{1/p} \\
&\leq \sup\limits_{n\geq 0} 
\esp[\kacb{0}]{k_n^p \left\|\kacxN[i]{n}-\kacx[i]{n} \right\|_{\Ffr_{n}}^p}^{1/p}\\
&\leq \sup\limits_{n\geq 0} k_n \esp[\kacb{0}]{ \left\|\kacxN[i]{n}-\kacx[i]{n} \right\|_{\Ffr_{n}}^p}^{1/p} \eqsp.
\end{align*}
Set $k \eqdef \sup_n k_n$, then
\begin{align*}
\sup\limits_{n\geq 0} \, \sup\limits_{\fb_n\in \OscOne(\spb{n})}
\left\|\fb_n(\t^i_n,\kacxN[i]{n}) - \fb_n(\t^i_n,\kacx[i]{n}) \right\|_p
&\leq k \sup\limits_{n\geq 0} \esp[\kacb{0}]{ \left\|\kacxN[i]{n}-\kacx[i]{n} \right\|_{\Ffr_{n}}^p}^{1/p} 
\end{align*}
From [\cite{delmoral:2004}, Corollary 7.4.5] (p. 249), as one assumes that there exists $m_i \geq 1$ for \\
$\t^i_{n,n+m_i}\in\spt{n}\times\ldots\times\spt{n+m_i}$,
\begin{align*}
 \sup\limits_{n\geq 0} \esp[\kacb{0}]{ \left\|\kacxN[i]{n}-\kacx[i]{n} \right\|_{\Ffr_{n}}^p}^{1/p} \leq \frac{a(p)}{\sqrt{\Nin}}\left(I +\frac{ m_i p_{\theta^i}(e^{3\alpha_{\theta^i}})}{\varepsilon(M^X_{\theta^i})^3\epsilon(G_{\theta^i})^{2m_i}}\right) \eqsp,
\end{align*}
where $I \eqdef \sup\limits_{n\geq 0} I(\Ffr_{n})<\infty$. 
One concludes easily.
\end{proof}

%%%%%%%%%%%%%%%%%%%%%%%%%%%%%%%%%%%%%%%%%%%%%%%%%%%%%%%%%%%%%%%%%%%%%%%%%%%%

\section{Asymptotic analysis of the labeled island particle algorithm}
\label{sec:asymptotic}
This section deals with the asymptotic behavior of the labeled island particle algorithm. 
Especially, we focus on the almost sure convergence.

%\subsection{Almost sure convergence}
Using \autoref{theo:Lp bound} obtained in \autoref{sec:lpbounds}, one can easily get the almost sure convergence of the double estimator $\kacti{n}^{\Nis}$ toward $\kacb{n}$ under the same assumptions as in \autoref{theo:Lp bound}.
\begin{thrm}
\label{thm:asconv}
 Under the same assumptions as in \autoref{theo:Lp bound}, for all $ n\geq 0$ and for every $k_n$- Lipschitz function $\fb_n \in \OscOne(\spb{n})$, one has 
 $$\kacti{n}^{\Nis} (\fb_n) \stackrel{a.s}{\longrightarrow} \kacb{n}(\fb_n)\text{, as }N\rightarrow\infty \eqsp,$$
 with $N=\Nis\Nin$ such that $\Nis=N^{\alpha}$ and $\Nin=N^{1-\alpha}$ for all $\alpha\in\left]0,1\right[$.
\end{thrm}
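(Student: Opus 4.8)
The plan is to upgrade the $\Lp$ estimate of \autoref{theo:Lp bound} to an almost sure statement by a Borel--Cantelli argument, exploiting the freedom to take the exponent $p$ as large as we please. First I would substitute the coupling $\Nis = N^{\alpha}$, $\Nin = N^{1-\alpha}$ into the bound of \autoref{theo:Lp bound}. Since both $\Nin = N^{1-\alpha} \to \infty$ and $\Nis = N^{\alpha} \to \infty$ as $N \to \infty$, this yields, for each fixed $n$ and $p$,
\begin{equation*}
\|\kacti{n}^{\Nis}(\fb_n) - \kacb{n}(\fb_n)\|_p \leq \frac{C_1(p,n)}{N^{(1-\alpha)/2}} + \frac{C_2(p,n)}{N^{\alpha/2}} \eqsp,
\end{equation*}
with $C_1(p,n) \eqdef k_n\, a(p)\,(I(\Ffr_{n})+b(n))$ and $C_2(p,n) \eqdef 2\, d(p) \sum_{q=0}^n \overline{g}_{q,n}\beta(\overline{P}_{q,n})$, two constants that are finite under the standing hypotheses and, crucially, \emph{independent of $N$}. (When $N^{\alpha}$ fails to be an integer one simply replaces $\Nis,\Nin$ by appropriate roundings, which only sharpens the inequality.)

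Next I would fix $n \geq 0$ and $\e > 0$ and invoke Markov's inequality, so that for every $p \in \nsetpos$,
\begin{align*}
\prob\left( |\kacti{n}^{\Nis}(\fb_n) - \kacb{n}(\fb_n)| > \e \right)
&\leq \e^{-p}\, \|\kacti{n}^{\Nis}(\fb_n) - \kacb{n}(\fb_n)\|_p^p \\
&\leq \frac{2^{p-1}}{\e^p}\left( \frac{C_1(p,n)^p}{N^{p(1-\alpha)/2}} + \frac{C_2(p,n)^p}{N^{p\alpha/2}} \right) \eqsp,
\end{align*}
the last step using $(x+y)^p \leq 2^{p-1}(x^p + y^p)$. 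The decisive step is then the choice of $p$: since $\alpha \in \left]0,1\right[$ is fixed, I would take any integer $p > 2/(\alpha \wedge (1-\alpha))$, which makes both exponents $p(1-\alpha)/2$ and $p\alpha/2$ strictly larger than $1$. With this $p$ frozen, both $\sum_{N\geq 1} N^{-p(1-\alpha)/2}$ and $\sum_{N \geq 1} N^{-p\alpha/2}$ converge, hence $\sum_{N \geq 1}\prob(|\kacti{n}^{\Nis}(\fb_n) - \kacb{n}(\fb_n)| > \e) < \infty$. The Borel--Cantelli lemma then shows that the event $\{|\kacti{n}^{\Nis}(\fb_n) - \kacb{n}(\fb_n)| > \e\}$ occurs only finitely often almost surely, and intersecting the resulting full-probability events over the countable family $\e = 1/m$, $m \in \nsetpos$, delivers $\kacti{n}^{\Nis}(\fb_n) \to \kacb{n}(\fb_n)$ almost surely, which is the claim.

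I expect no serious obstacle here, since the whole analytic difficulty has already been absorbed into \autoref{theo:Lp bound}. The only subtlety worth flagging is the interplay between $p$ and the constants $a(p)$, $d(p)$, which grow factorially in $p$; but because $p$ must be chosen \emph{before} letting $N \to \infty$, these quantities enter only through the $N$-independent constants $C_1(p,n)$, $C_2(p,n)$ and therefore never threaten summability in $N$. The remaining work is pure bookkeeping: tracking the dependence of the constants on $p$ and $n$ and confirming that, at fixed $p$, the error decays like a summable power of $N$.
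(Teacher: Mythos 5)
Your proposal is correct and follows essentially the same route as the paper: substitute the coupling $\Nis=N^{\alpha}$, $\Nin=N^{1-\alpha}$ into the bound of \autoref{theo:Lp bound}, apply Markov's inequality, choose $p$ large enough (depending only on $\alpha$) that the bound is summable in $N$, and conclude by Borel--Cantelli. The only differences are cosmetic --- you use $(x+y)^p\leq 2^{p-1}(x^p+y^p)$ where the paper expands the binomial, and you make explicit the standard intersection over $\e=1/m$ --- so no further comment is needed.
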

\begin{proof}
Let $\fb_n \in \OscOne(\spc{n})$ be a $k_n$-Lipschitz function and $\varepsilon >0$ a real constant. For all $p \in \nsetpos$, by Markov's inequality, one has
$$
 \prob \left(|\kacti{n}^{\Nis}(\fb_n) - \kacb{n}(\fb_n) | > \varepsilon \right) 
 \leq \frac{\esp[\kacb{0}]{|\kacti{n}^{\Nis}(\fb_n) - \kacb{n}(\fb_n) |^p}}{\varepsilon^p} \eqsp.
 $$
Then, applying \autoref{theo:Lp bound}, and noting 
\begin{center}
$C(p,n) \eqdef k_n a(p) (I(\mathcal{F}_n)+b(n))$ \quad and \quad
$\tilde{C}(p,n) \eqdef 2 d(p) \sum_{q=0}^n \overline{g}_{q,n}\beta(\overline{P}_{q,n})\eqsp,$ 
\end{center}
one has 
\begin{align*}
 \|\kacti{n}^{\Nis} (\fb_n) -\kacb{n}(\fb_n)\|_p^p &\leq \left(\frac{C(p,n)}{\sqrt{N^\alpha}}+\frac{\tilde{C}(p,n)}{\sqrt{N^{1-\alpha}}}\right)^p\\
 &= \sum_{k=0}^p \binom{p}{k} \frac{C(p,n)^k \tilde{C}(p,n)^{p-k}}{N^{(\a-\frac{1}{2})k + \frac{1-\a}{2}p}} \eqsp.
\end{align*}
The finite sequence $(s_{\a,p}(k))_{k=0}^p$ defined by $s_{\a,p}(k)=(\a-1/2)k + (1-\a)p/2$ is bounded from below by
$$
m_{\a,p} \eqdef \frac{\a p}{2} \1_{0 < \a \leq 0.5} + \frac{(1-\a)p}{2} \1_{0.5 < \a < 1} \eqsp,
$$
so that
$$
 \|\kacti{n}^{\Nis} (\fb_n) -\kacb{n}(\fb_n)\|_p^p 
\leq  \dfrac{\left(C(p,n) + \tilde{C}(p,n) \right)^p}{N^{m_{\a,p}}} \eqsp.
$$
Choose $p$ a positive integer such that $m_{\a,p} \geq 2 $ \text{i.e.} satisfying
\begin{align}
 \left\{ 
 \begin{array}{l}p>\frac{4}{\alpha} \quad \text{if}\quad 0< \a \leq 0.5\\ 
 p>\frac{4}{1-\alpha} \quad\text{if}\quad 0.5 < \a <1 \eqsp.
 \end{array}\right.
\end{align}
Hence,
$$
 \|\kacti{n}^{\Nis} (\fb_n) -\kacb{n}(\fb_n)\|_p^p 
\leq  \dfrac{\left(C(p,n) + \tilde{C}(p,n) \right)^p}{N^2} \eqsp.
$$
By comparison of series of non-negative general term with a convergent Riemann series, one concludes that the series
$$\sum_{N\geq 0} \prob \left(|\kacti{n}^{\Nis} (\fb_n) -\kacb{n}(\fb_n)|>\varepsilon\right)\text{ is convergent,}$$
which implies by Borel-Cantelli's lemma, that 
$$\kacti{n}^{\Nis} (\fb_n)\stackrel{a.s}{\longrightarrow} \kacb{n}(\fb_n)\text{, as }N\rightarrow\infty \eqsp.$$
\end{proof}

Taking such kind of $N$ means that for a total budget $N$ of particles, one can consider any decomposition (as a power of $N$) of the particles between islands and within each island.

%\subsection{Limit theorems}
%\input{limit_theorems}

%%%%%%%%%%%%%%%%%%%%%%%%%%%%%%%%%%%%%%%%%%%%%%%%%%%%%%%%%%%%%%%%%%%%%%%%%%%%%%%%%

\section{Example of application} \label{sec:application}
In order to give illustration of this algorithm and of the previous theoretical results obtained, we present in this section two estimation problems.\\
First let us recall the example of a mobile whose evolution is influenced by an unknown force which has been described in \eqref{mobile}. Noisy observations of this physical systems are available. We resume the dynamics by the following system of equations :
\begin{align}
 \left\lbrace\begin{array}{lll}     
              X_{n+1} &=& X_{n} + V_n \left(\begin{array}{c}\cos\a  \\\sin\a \end{array}\right)\Delta t + \Theta_{n+1} \Delta t + B^X_n\\
	      V_{n+1} & = & V_n + B^V_n \\
              Y_{n} &=& h(X_n,V_n) + B^Y_n
             \end{array}
\right.
\end{align}
where $X_{n+1}$ denotes the position of the mobile in the plane, $V_n$ the proper speed of the mobile and $Y_{n}$ their noisy observations through the observation function $h$, with $B^Y_n\sim \normdist(0,\Sigma^{Y})$. The course track of the mobile $\a$ is constant over time. The vector $\Theta_n$ is a random variable and denotes the unknown force acting on the position of the mobile. Its equation of evolution is given by 
$$\Theta_{n+1}=\left(\begin{array}{c}\Theta^1_{n+1}\\\Theta^2_{n+1}\end{array}\right)=\left(\begin{array}{c}\cos\Theta^1_{n}\\\sin\Theta^2_n\end{array}\right)+B^{\Theta}_n$$
with $B^{\Theta}_n\sim\normdist(0,\Sigma^{\Theta})$.
The initial condition of the system is given by $X_0\sim \normdist(m^X_{\t_0,0},\Sigma^X_{\t_0,0})$, $V_0\sim \normdist(m^V_{0},\Sigma^V_{0})$ and $\a=\pi/2$.
We are interested in the estimation of the position of the mobile, which depends on the parameter $\Theta_n$.  We thus need to learn both the force, the speed and the position of the mobile. The tricky part is that there is no observation of the force. 
Here we will consider that the speed is a Poisson process, that is $B^V_n$ is a Poisson process of intensity 0.03 where the jumps high is given by a standard normal distribution of variance 3. Concerning $B^X_n$, it is a Gaussian random variable such that $B^X_n\sim\normdist(0,\Sigma^X)$.
We present now the results obtained for a simulating time of 125 minutes with $\Delta t=15s$. The value of the different variances are set to $$\Sigma^{\theta}=\left(\begin{array}{c c}1&0\\0&1\end{array}\right),\ \Sigma^{X}=\Sigma^X_{\t_0,0}=\left(\begin{array}{c c}1.5&0\\0&1.5\end{array}\right),\text{ and }\Sigma^{Y}=\left(\begin{array}{c c c}0.5&0&0\\0&0.5&0\\0&0&1\end{array}\right).$$ As one can notice, to estimate the law of the couple $(\Theta_n,\kacX{n})$ given the observations $Y_{0:n}$,
one can use Interacting Kalman filters and labeled island particle filters (LIPFs), detailed respectively in Algorithms \ref{IKF} and \ref{algorithme island}. We present comparative results obtained thanks to both methods. \\
Concerning the labeled version, the potential of each particle is given by the density of the observations, that is for all $x_n\in\spx{n}$ and for all $\t_n\in\spt{n}$:
$$ G_{n}(\theta_n,x_n)\propto \exp\left(-\frac{1}{2}(y_n-h(x_n,v_n))^T(\Sigma^{Y})^{-1}(y_n-h(x_n,v_n))\right).$$
On all the figures the realization of the true signal is represented by the color black, the observations $Y$ are represented by the color blue, the filtered signal obtained thanks to Algorithm \ref{algorithme island} with $\Nis=100$ and $\Nin=300$ in red and results obtained using Algorithm \ref{IKF} in green with $\Nis=100$.
On \autoref{estimation_vitesse}, one realization of the signal $V_n$, its observed and its estimations counterparts are represented with respect to time. As one may observe, the true signal is well estimated by the technique we develop. Indeed, here the Interacting Kalman filter is not optimal as the noise sequence is not Gaussian.
On \autoref{estimation force strength}, we represent the temporal evolution of the force strength estimation. One can notice that even if no observation is  available, we are able to find back the value of the true signal thanks to Algorithm \ref{algorithme island} whereas Algorithm \ref{IKF} retrieves only a global trend. 
\begin{figure}[H]
\begin{center}
\includegraphics[width=\textwidth]{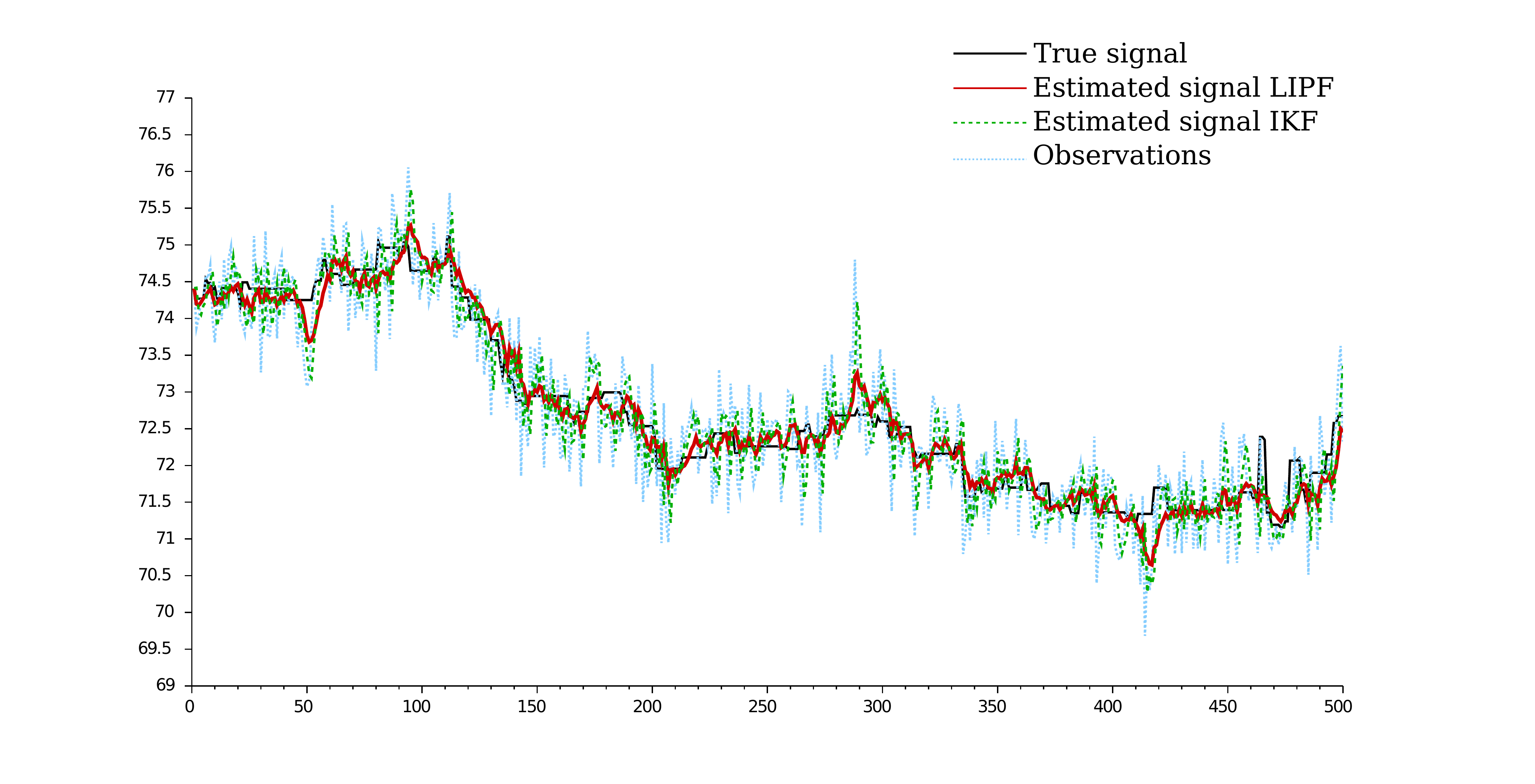}
\caption{Temporal evolution of the mobile's speed, its observed and filtered counterparts.}
\label{estimation_vitesse}
\end{center}
\end{figure}
\autoref{direction} represents the temporal evolution of one realization of the force orientation and its estimated counterparts. Results obtained thanks to Algorithm \ref{algorithme island} give a better estimation of the true signal than the results obtained thanks to the Algorithm \ref{IKF}.
\begin{figure}[H]
\begin{minipage}[b]{0.50\linewidth}
  \includegraphics[height=4cm,width=7cm]{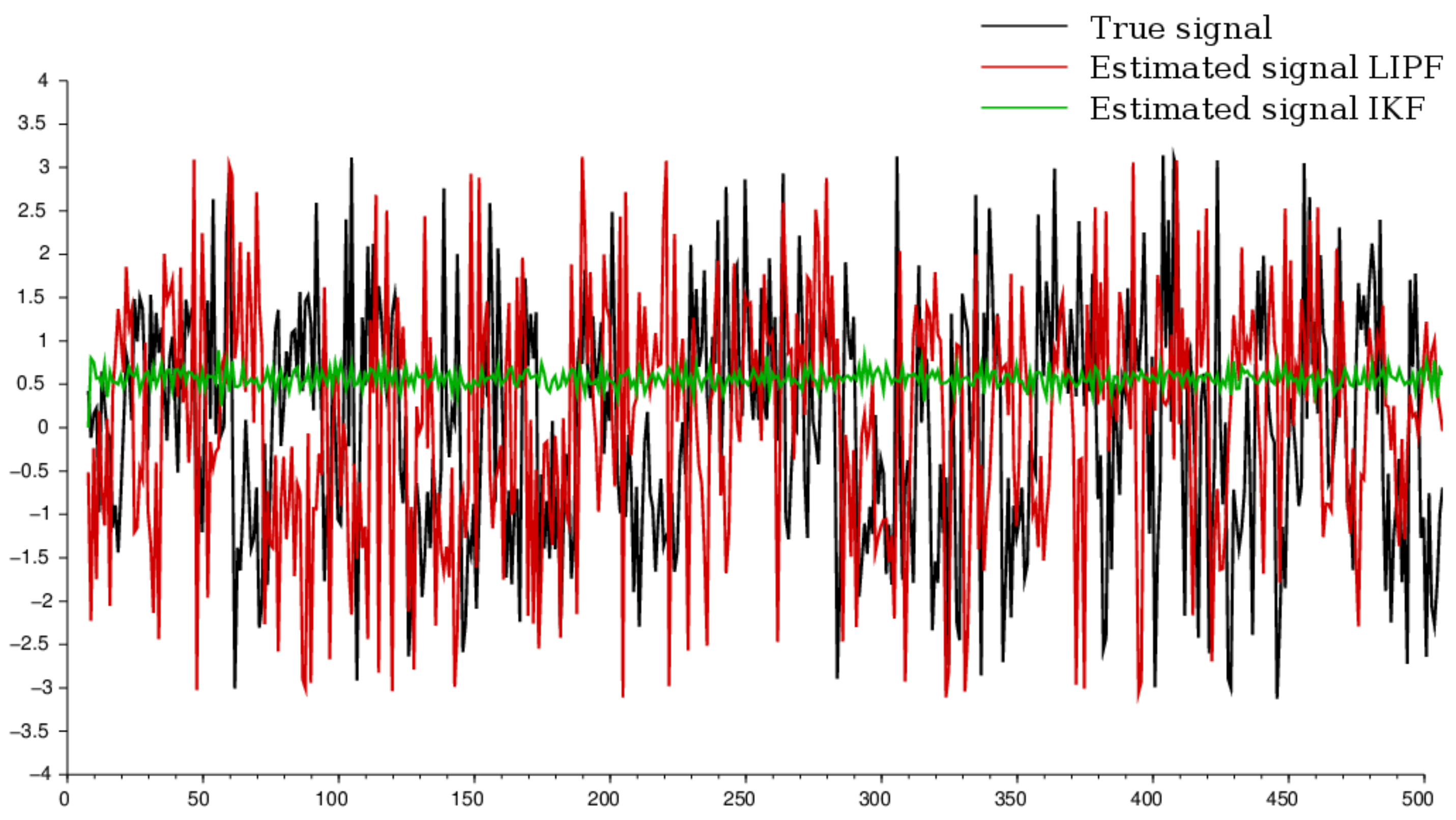}
  \caption{Temporal evolution of the force orientation in $rad$}
  \label{direction} 
\end{minipage}
 \hspace{0.5cm}
\begin{minipage}[b]{0.45\linewidth} 
\includegraphics[height=4cm,width=6cm]{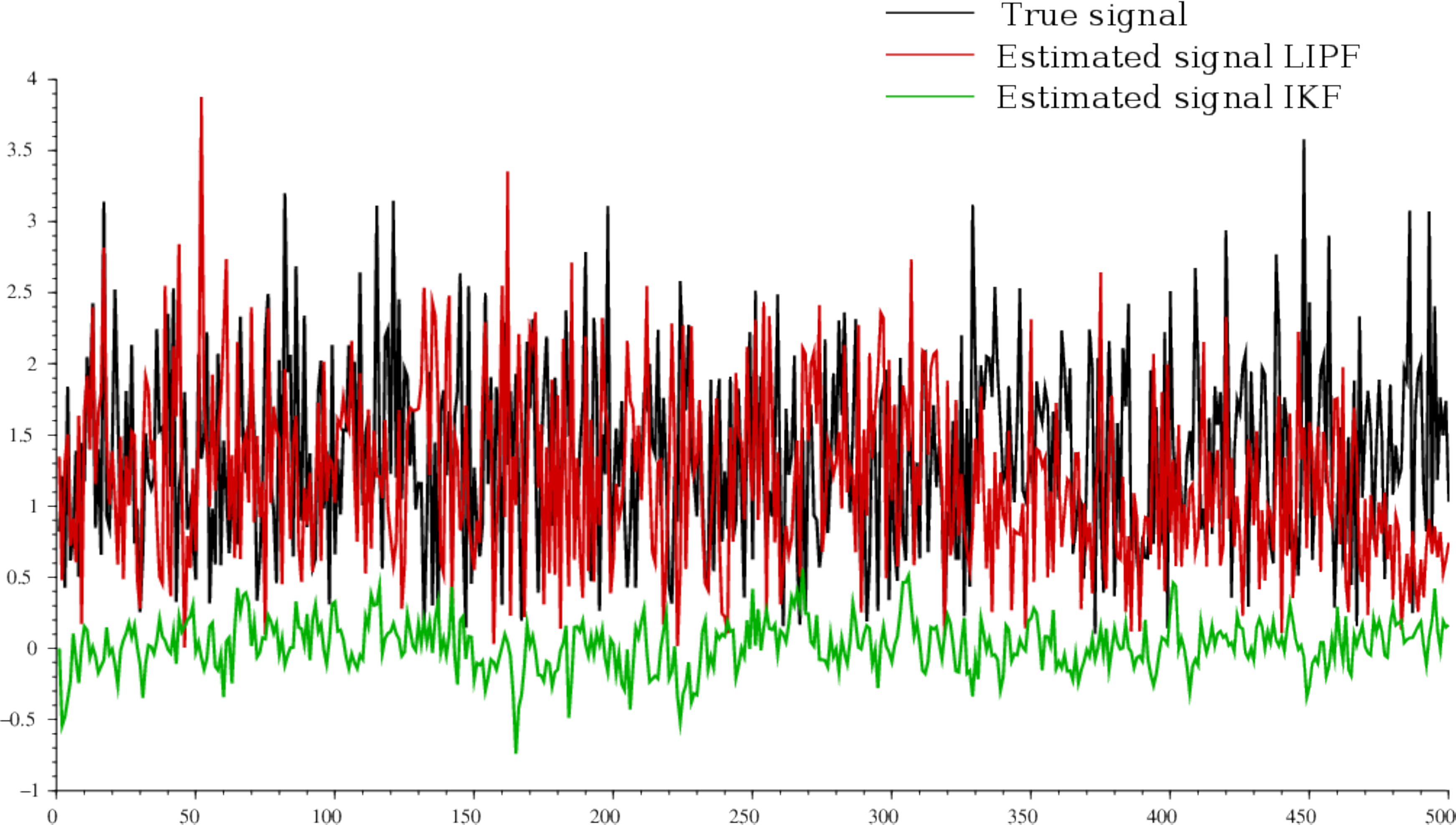}
\caption{Temporal evolution of the force strength in $m.s^{-1}$}
\label{estimation force strength}
\end{minipage}
\end{figure}
From this example we can conclude that the labeled island particle filter is able to filter observations of the process while estimating the environment where the process evolves. Moreover the comparison with the Interacting Kalman filter algorithm shows that the labeled island particle filter is more effective to treat this double level estimation problem.
~~\\
Let us consider the 2-D filtering problem inspired from the growth model \cite{Kitagawa:1987}. This model, which is a standard benchmark example in the particle filtering literature, is given by the following system of equations :
\begin{align*}
 \left\lbrace\begin{array}{lll}
              \Theta_{n+1} & = & 8\cos(1.2(n+1))+B_{n+1}^{\theta}\\
              X_{n+1} &=& \displaystyle\frac{X_n}{2}+25\displaystyle\frac{X_n}{1+X_n^2}+\Theta_{n+1}+B_{n+1}^X \\
              Y_{n} &=& X_n+B^Y_n
             \end{array}
\right.
\end{align*}
where $\Theta_0\sim\normdist(0,\sigma_{\t}^2)$, $X_0\sim\normdist(0,\sigma_{X}^2)$, $B_{n+1}^{\t}\sim\normdist(0,\sigma_{\t}^2)$, $B_{n+1}^{X}\sim\normdist(0,\sigma_X^2)$ and $B^Y_{n}\sim\normdist(0,\sigma_Y^2)$.\\
We use the labeled island particle model to estimate the law of the couple $(\Theta_n,\kacX{n})$ given the observations $Y_{0:n}$, where the potential functions $G_n$ are given by the likelihood of the observations, that is for all $x_n\in\spx{n}$ and $\t_n\in\spt{n}$:
$$ G_{n}(\theta_n,x_n)\propto \exp\left(-\frac{(Y_n-x_n)^2}{2\sigma_Y^2}\right) \eqsp. $$
We present the results obtained for a simulating time of $1000$ time steps. 
The different variances are set to $\sigma_{\t}^2=1$, $\sigma_{X}^2=1$ and $\sigma_{Y}^2=10$. 
On all the figures the realization of the true signal is represented in black color, the observations $Y$ are represented in blue, and the filtered signal obtained thanks to Algorithm \ref{algorithme island} with $\Nis=200$ and $\Nin=100$ is represented in red. 
On Figure \ref{theta example}, one realization of the signal $\Theta$ and its estimation obtained thanks to the labeled island particle algorithm are represented on a small period of time. As one may observe, the true signal is well estimated even if no observations are available.
On Figure \ref{x example}, one realization of the process $X$ is represented, its observed and its estimation counterparts. Even if the observations are really noisy, one is able to filter out the noise to find back the value of the true signal.
 \begin{figure}[H]
 \begin{minipage}[b]{0.45\linewidth}
\centering
\includegraphics[width=\textwidth]{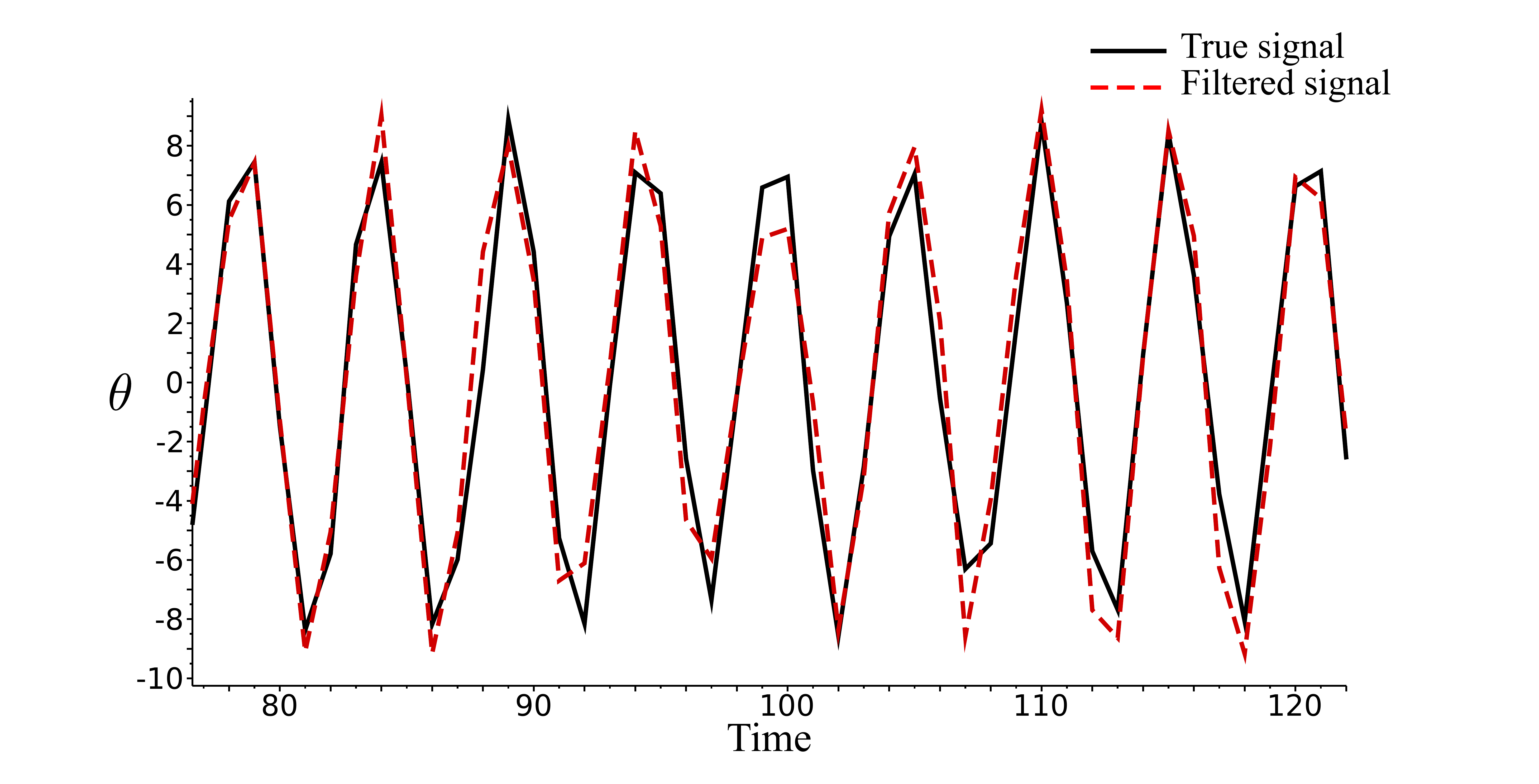}
\caption{Temporal zoom on one realization of the $\Theta$ process and its estimated counterpart}
\label{theta example}
\end{minipage}
\hspace{0.5cm}
\begin{minipage}[b]{0.45\linewidth}
\centering
\includegraphics[width=\textwidth]{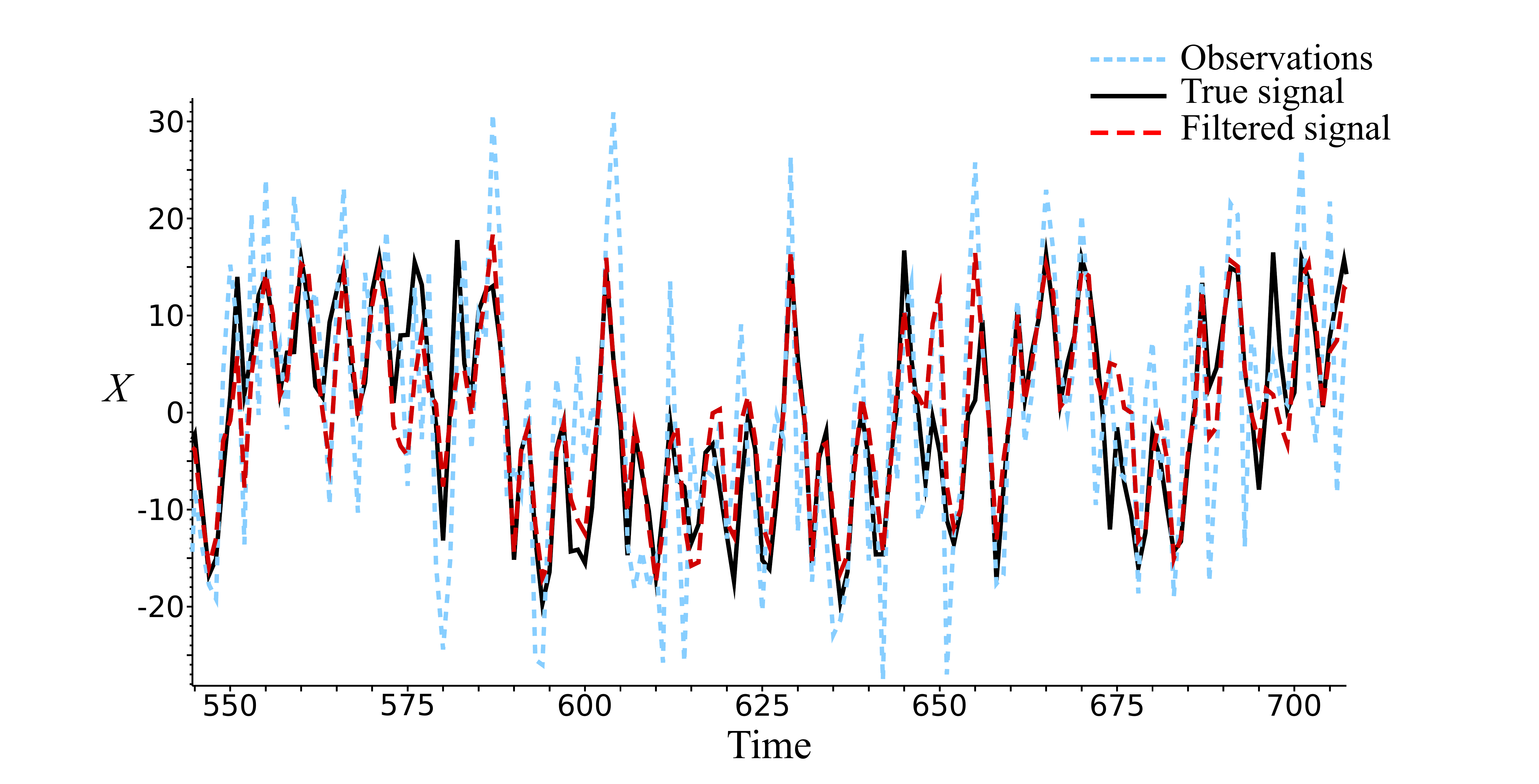}
\caption{Temporal zoom on one realization of the $X$ process, its observed and estimated counterparts}
\label{x example}
\end{minipage}
 \end{figure}
\begin{figure}[H]
 \begin{center}
  \includegraphics[scale=0.25]{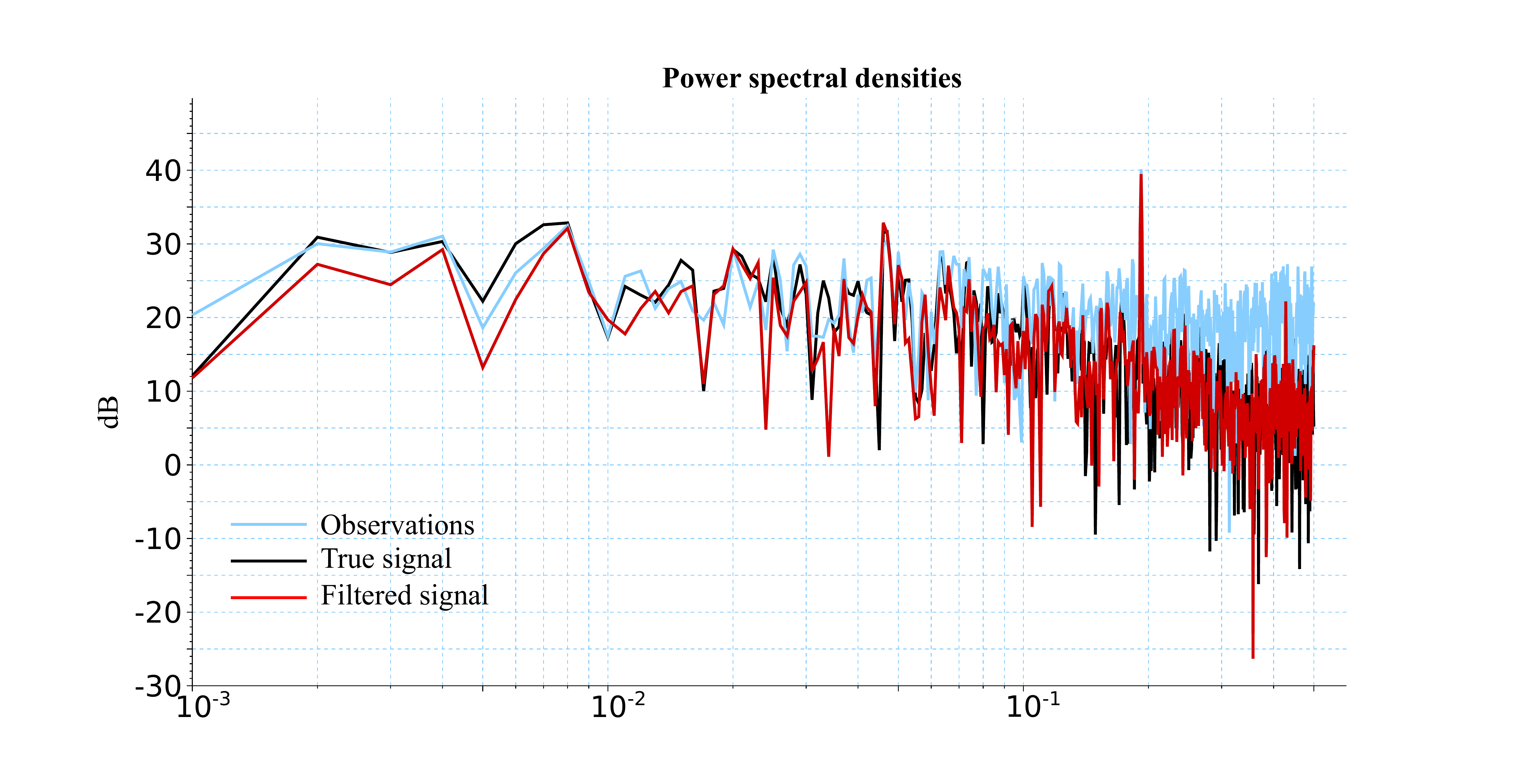}
  \caption{True, filtered and observed power spectral densities for one realization of the process $X$ over 1000 time steps.}
  \label{dsp example}
 \end{center}
\end{figure}
Indeed, as one may have noticed, on Figure \ref{dsp example}, the filtered power spectral density (in red) is closer to the black line, representing the ``true'' signal, than the observed power spectral density which has the same shape as a white noise for the high frequencies. Moreover, some frequencies are found even if there are not present in the observed signal. These two observations illustrate the convergence of the estimator constructed by the \lip ~algorithm detailed in Algorithm \ref{algorithme island}.

Then we run 100 times the same experiment to get a sample of realizations for the true signal and the filtered signal. In that way one can illustrate the theoretical results obtained for the $\Lp$ error bound.
On figures \ref{erreur theta example} and \ref{erreur x example} are presented the $\mathbb{L}^2$ errors between the estimated law and the true law at one time step respectively for $\Theta$ and $X$ in function of the number of islands $\Nis$ and the number of particles inside each island $\Nin$. This error decreases both with the number of particles and the number of islands as it was suggested by the \autoref{theo:Lp bound}.
\begin{figure}[H]
 \begin{minipage}[b]{0.45\linewidth}
\centering
\includegraphics[width=\textwidth]{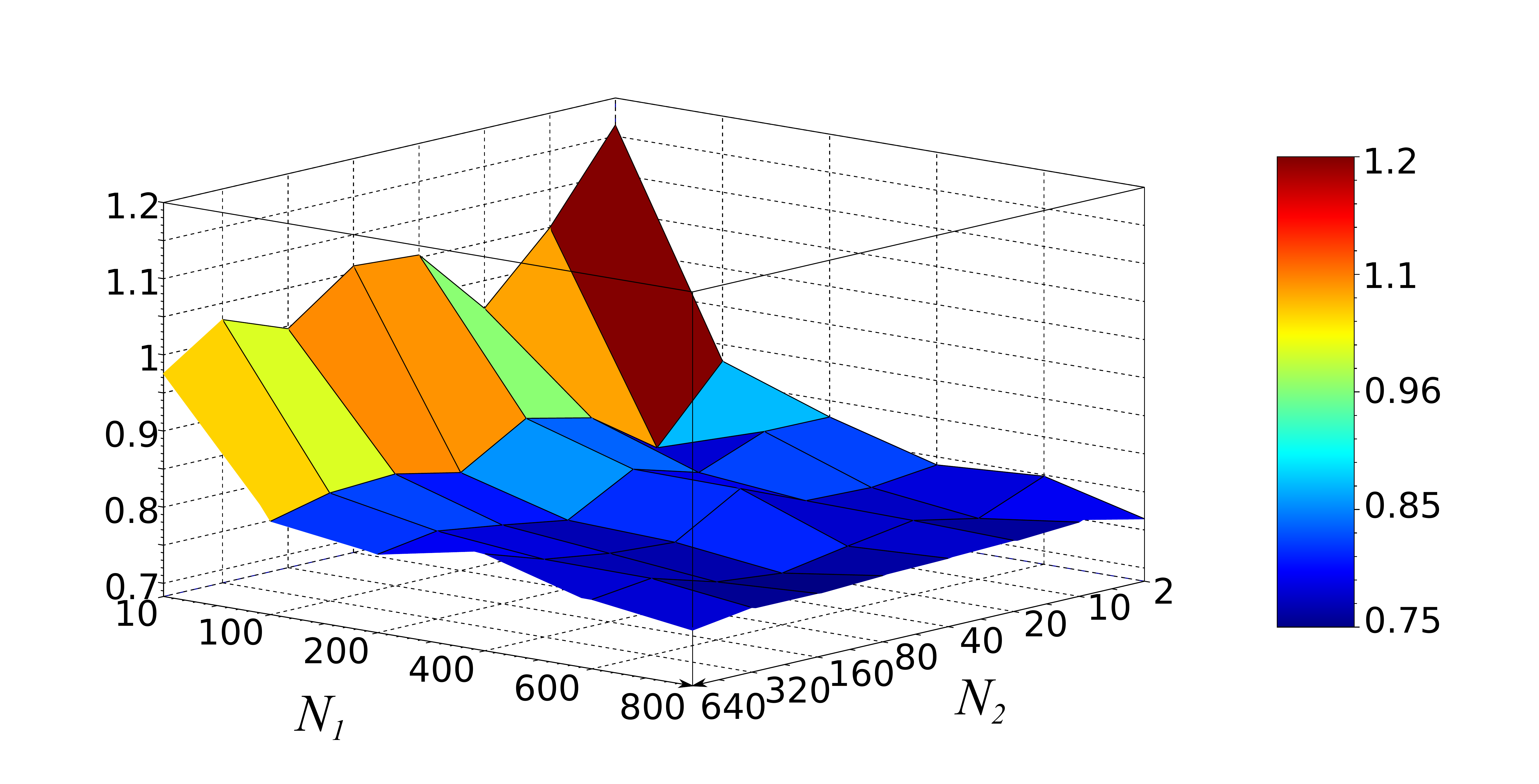}
\caption{Evolution of the estimation's error for the law of $\Theta$ for 100 realizations of the process in function of $\Nis$ and $\Nin$}
\label{erreur theta example}
\end{minipage}
\hspace{0.5cm}
\begin{minipage}[b]{0.45\linewidth}
\centering
\includegraphics[width=\textwidth]{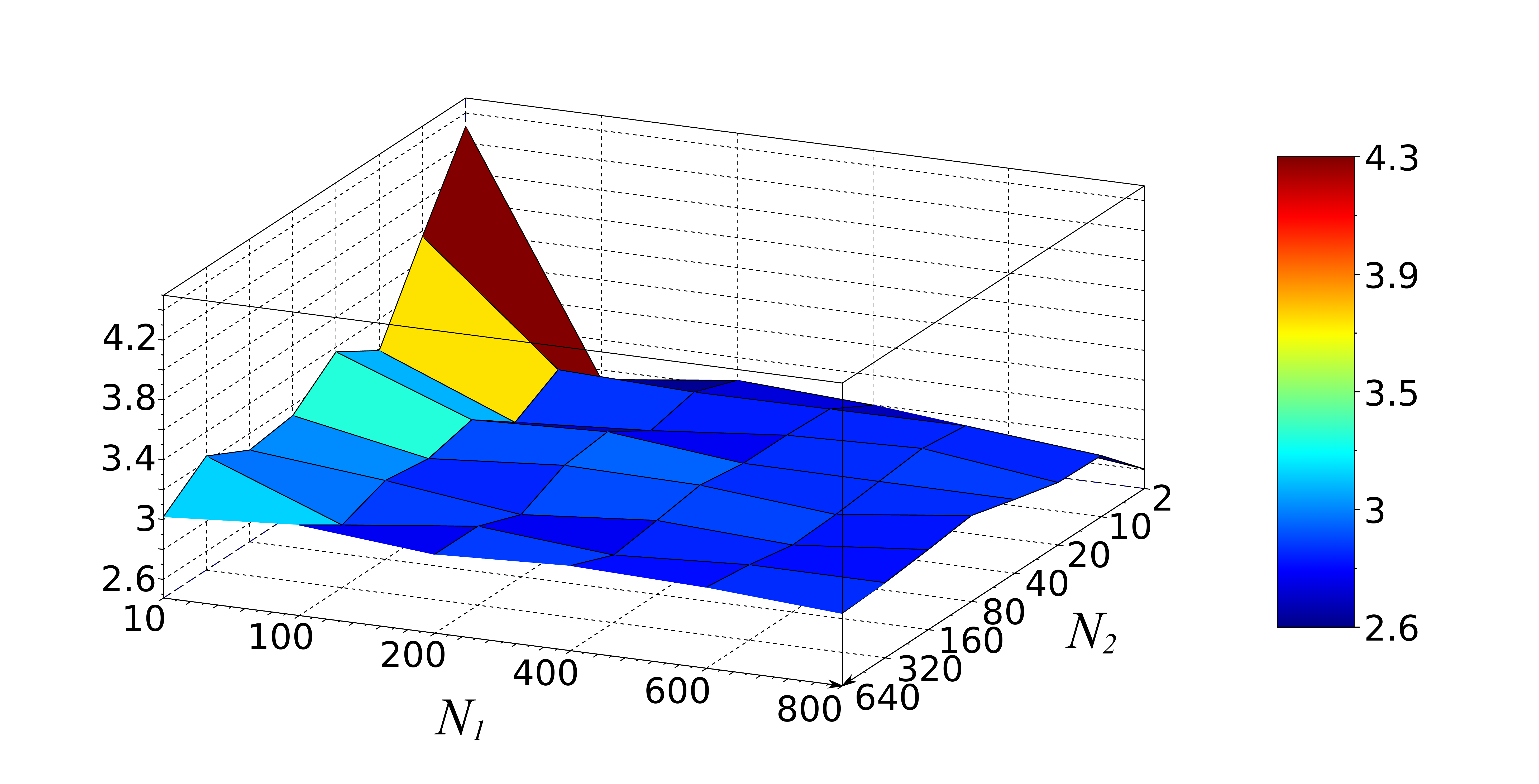}
\caption{Evolution of the estimation's error for the law of $X$ for 100 realizations of the process in function of $\Nis$ and $\Nin$}
\label{erreur x example}
\end{minipage}
 \end{figure}
Concerning the variance of the error made between the true law and the filtered one, on figures \ref{var theta example} and \ref{var x example} for $\Theta$ and $X$ respectively, one can observe that the results obtained in \autoref{thm:asconv} are confirmed. 
Moreover one can notice that the variance is more influenced by the number of islands than the number of particles inside each island. Indeed as in \autoref{var x example}, the variance obtained for a fixed time step is varying with respect to the number of islands and number of particles inside each islands. But if the number of islands influences the variance, we can observe that the number of particles inside each island does not seem to be really influent for a given number of islands.
 \begin{figure}[H]
 \begin{minipage}[b]{0.45\linewidth}
\centering
\includegraphics[width=\textwidth]{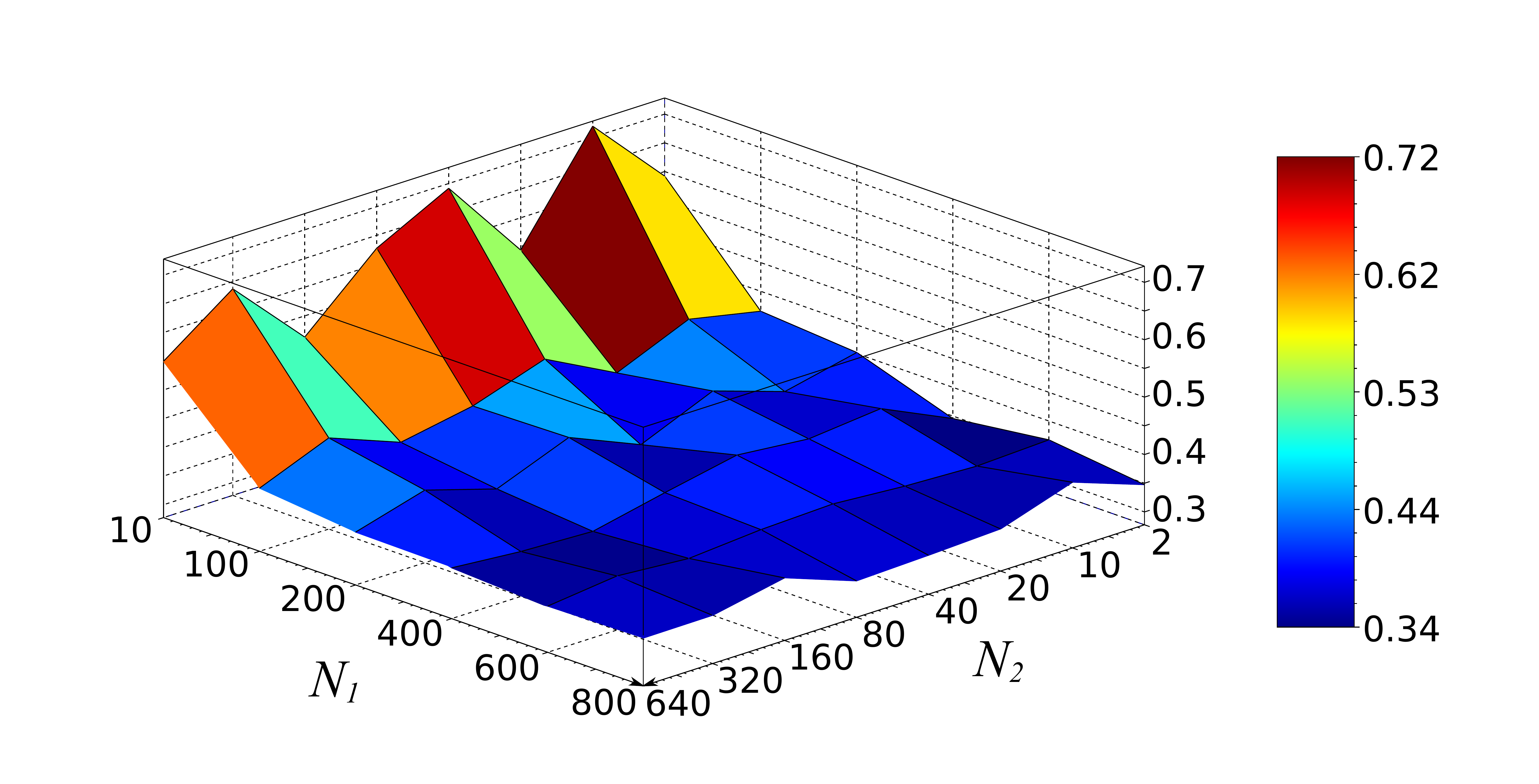}
\caption{Evolution of the variance estimation's error for the law of $\Theta$ for 100 realizations of the process in function of $\Nis$ and $\Nin$}
\label{var theta example}
\end{minipage}
\hspace{0.5cm}
\begin{minipage}[b]{0.45\linewidth}
\centering
\includegraphics[width=\textwidth]{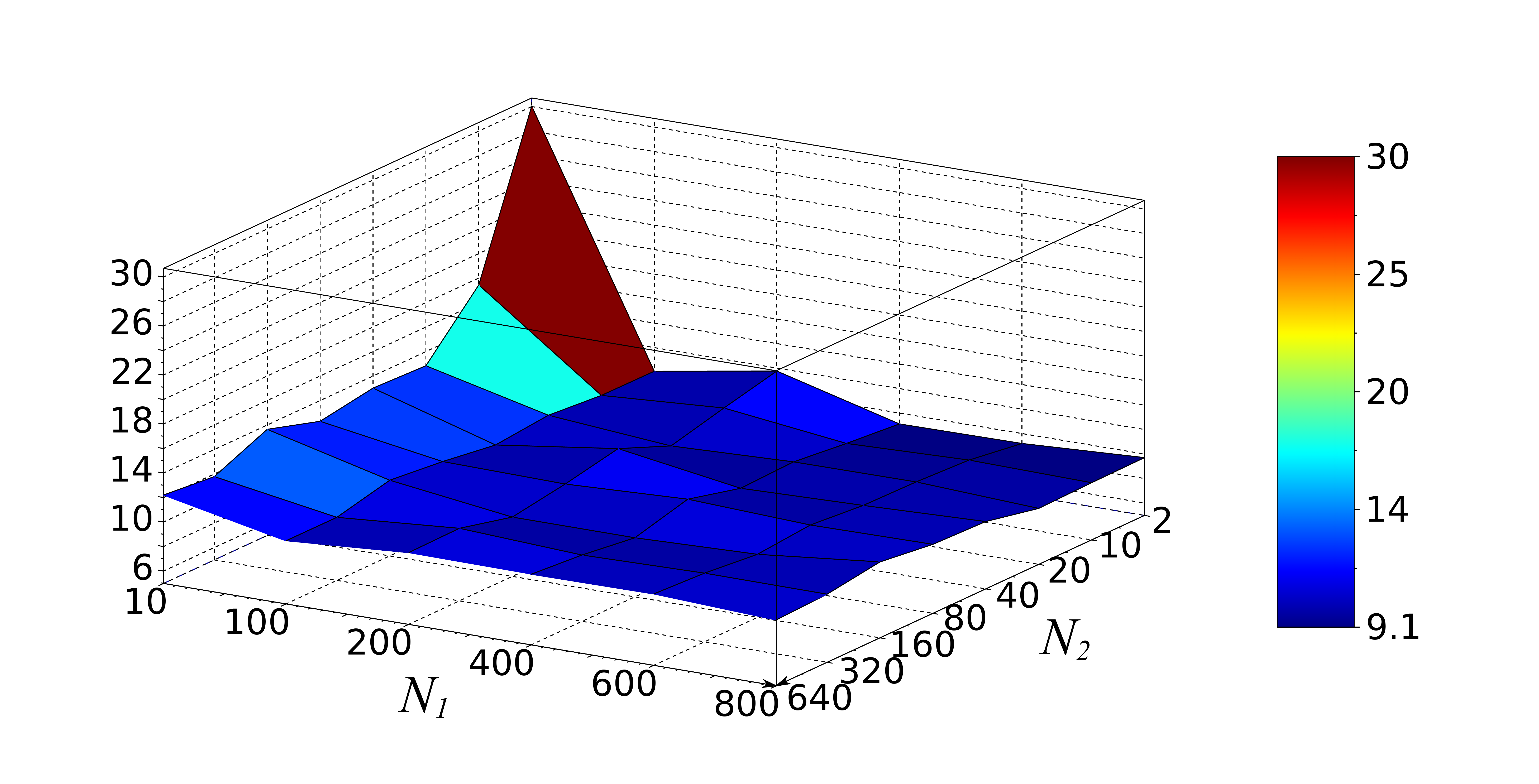}
\caption{Evolution of the variance estimation's error for the law of $X$ for 100 realizations of the process in function of $\Nis$ and $\Nin$}
\label{var x example}
\end{minipage}
 \end{figure}

%%%%%%%%%%%%%%%%%%%%%%%%%%%%%%%%%%%%%%%%%%%%%%%%%%%%%%%%%%%%%%%%%%%%%%%%%%%%%%%%%
%%-----------------------------
%%      your bibliography
%%-----------------------------
\bibliographystyle{IEEEtran}
\bibliography{biblio}

% Generated by IEEEtran.bst, version: 1.13 (2008/09/30)
\begin{thebibliography}{10}
\providecommand{\url}[1]{#1}
\csname url@samestyle\endcsname
\providecommand{\newblock}{\relax}
\providecommand{\bibinfo}[2]{#2}
\providecommand{\BIBentrySTDinterwordspacing}{\spaceskip=0pt\relax}
\providecommand{\BIBentryALTinterwordstretchfactor}{4}
\providecommand{\BIBentryALTinterwordspacing}{\spaceskip=\fontdimen2\font plus
\BIBentryALTinterwordstretchfactor\fontdimen3\font minus
  \fontdimen4\font\relax}
\providecommand{\BIBforeignlanguage}[2]{{%
\expandafter\ifx\csname l@#1\endcsname\relax
\typeout{** WARNING: IEEEtran.bst: No hyphenation pattern has been}%
\typeout{** loaded for the language `#1'. Using the pattern for}%
\typeout{** the default language instead.}%
\else
\language=\csname l@#1\endcsname
\fi
#2}}
\providecommand{\BIBdecl}{\relax}
\BIBdecl

\bibitem{cont:2006}
\BIBentryALTinterwordspacing
R.~Cont, ``Model uncertainty and its impact on the pricing of derivative
  instruments,'' \emph{Mathematical Finance}, vol.~16, no.~3, pp. 519--547,
  2006. [Online]. Available:
  \url{http://dx.doi.org/10.1111/j.1467-9965.2006.00281.x}
\BIBentrySTDinterwordspacing

\bibitem{Augustin:carlier:2000}
\BIBentryALTinterwordspacing
J.-C. Augustin and V.~Carlier, ``Mathematical modelling of the growth rate and
  lag time for listeria monocytogenes,'' \emph{International Journal of Food
  Microbiology}, vol.~56, no.~1, pp. 29 -- 51, 2000. [Online]. Available:
  \url{http://www.sciencedirect.com/science/article/pii/S0168160500002233}
\BIBentrySTDinterwordspacing

\bibitem{ichard:baehr:2013}
C.~Ichard and C.~Baehr, ``Inference of a random environment from random process
  realizations: Formalism and application to trajectory prediction,'' in
  \emph{ISIATM 2013, 2nd International Conference on Interdisciplinary Science
  for Innovative Air Traffic Management}, 2013.

\bibitem{delmoral:2004}
P.~Del~Moral, \emph{Feynman-Kac formulae: genealogical and interacting particle
  systems with applications}.\hskip 1em plus 0.5em minus 0.4em\relax Series:
  Probability \& Applications Springer Verlag, 2004.

\bibitem{zghal:mevel:delmoral:2014}
\BIBentryALTinterwordspacing
M.~Zghal, L.~Mevel, and P.~Del~Moral, ``Modal parameter estimation using
  interacting kalman filter,'' \emph{Mechanical Systems and Signal Processing},
  vol.~47, no. 1-2, pp. 139 -- 150, 2014, mSSP Special Issue on the
  Identification of Time Varying Structures and Systems. [Online]. Available:
  \url{http://www.sciencedirect.com/science/article/pii/S0888327012004633}
\BIBentrySTDinterwordspacing

\bibitem{baehr:2008}
\BIBentryALTinterwordspacing
C.~Baehr, ``Mod\'elisation probabiliste des \'ecoulements atmosph\'eriques
  turbulents afin d'en filtrer la mesure par approche particulaire,'' Ph.D.
  dissertation, 2008. [Online]. Available:
  \url{http://www.theses.fr/2008TOU30113}
\BIBentrySTDinterwordspacing

\bibitem{verge:dubarry:delmoral:moulines:2013}
\BIBentryALTinterwordspacing
C.~Verg\'{e}, C.~Dubarry, P.~Del~Moral, and E.~Moulines, ``On parallel
  implementation of sequential {M}onte {C}arlo methods: the island particle
  model,'' \emph{Statistics and Computing}, vol.~25, no.~2, pp. 243--260, march
  2015. [Online]. Available:
  \url{http://link.springer.com/article/10.1007%2Fs11222-013-9429-x}
\BIBentrySTDinterwordspacing

\bibitem{verge:delmoral:moulines:olsson:2014}
\BIBentryALTinterwordspacing
C.~Verg{\'e}, P.~Del~Moral, E.~Moulines, and J.~Olsson, ``Convergence
  properties of weighted particle islands with application to the double
  bootstrap algorithm,'' 2014, preprint. [Online]. Available:
  \url{http://arxiv.org/pdf/1410.4231v1.pdf}
\BIBentrySTDinterwordspacing

\bibitem{chopin:jacob:papaspiliopoulos:2013}
N.~Chopin, P.~E. Jacob, and O.~Papaspiliopoulos, ``$\text{S{M}{C}}^2$: an
  efficient algorithm for sequential analysis of state space models,''
  \emph{Journal of the Royal Statistical Society: Series B (Statistical
  Methodology)}, vol.~75, no.~3, pp. 397--426, 2013.

\bibitem{chopin:2002}
N.~Chopin, ``A sequential particle filter method for static models,''
  \emph{Biometrika}, vol.~89, no.~3, pp. 539--552, 2002.

\bibitem{crisan:miguez:2013}
\BIBentryALTinterwordspacing
D.~Crisan and J.~Miguez, ``Nested particle filters for online parameter
  estimation in discrete-time state-space markov models,'' 2013, preprint.
  [Online]. Available: \url{http://arxiv.org/pdf/1308.1883.pdf}
\BIBentrySTDinterwordspacing

\bibitem{Kitagawa:1987}
G.~Kitagawa, ``Non-{G}aussian state space modeling of nonstationary time
  series,'' \emph{Journal of the American Statistical Association}, vol.~82,
  no. 400, pp. 1023--1063, December 1987.

\end{thebibliography}

\end{document}